\definecolor{mygreen}{RGB}{28,172,0} 
\definecolor{mylilas}{RGB}{170,55,241}
\newtheorem{theorem}{Theorem}[section]
\newtheorem{lemma}[theorem]{Lemma}
\newtheorem{proposition}[theorem]{Proposition}
\newtheorem{remark}[theorem]{Remark}
\begin{document}
\title{Where to place a spherical obstacle so as to maximize the first nonzero Steklov eigenvalue}
\author[]{Ilias Ftouhi}

\address[Ilias Ftouhi]{Friedrich-Alexander-Universität  Erlangen-Nürnberg, Department of Mathematics, Chair in Applied Analysis – Alexander von Humboldt Professorship, Cauerstr. 11, 91058 Erlangen, Germany.}
\email{ilias.ftouhi@fau.de}

\lstset{language=Matlab,%
    breaklines=true,%
    morekeywords={matlab2tikz},
    keywordstyle=\color{blue},%
    morekeywords=[2]{1}, keywordstyle=[2]{\color{black}},
    identifierstyle=\color{black},%
    stringstyle=\color{mylilas},
    commentstyle=\color{mygreen},%
    showstringspaces=false,
    numbers=left,%
    numberstyle={\tiny \color{black}},
    numbersep=9pt, 
    emph=[1]{for,end,break},emphstyle=[1]\color{red}, 
}

\date{December 30. 2021}

\begin{abstract}
We prove that among all doubly connected domains of $\mathbb{R}^n$ of the form $B_1\backslash \overline{B_2}$, where $B_1$ and $B_2$ are open balls of fixed radii such that $\overline{B_2}\subset B_1$, the first nonzero Steklov eigenvalue achieves its maximal value uniquely when the balls are concentric. Furthermore, we show that the ideas of our proof also apply to a mixed boundary conditions eigenvalue problem found in literature.
\end{abstract}



\maketitle


\section{Introduction}
\label{S:1}
\subsection{Optimization of the Steklov eigenvalue}
Let $\Omega\subset \mathbb{R}^n$ be a bounded open set with Lipschitz boundary. In this paper, we consider the following Steklov eigenvalue problem for the Laplace operator: 
\begin{equation}\label{steklov}
\begin{cases}
    \Delta u = 0 &\qquad\mbox{in $\Omega$,}\vspace{1mm} \\
    \ \frac{\partial u}{\partial n} = \sigma u & \qquad\mbox{on $\partial \Omega$,} 
\end{cases}
\end{equation}
where $\partial u /\partial n$ is the outer normal derivative of $u$ on $\partial \Omega$. It is {well-known} that the Steklov spectrum is discrete as long as the trace operator $H^1(\Omega)\rightarrow L^2(\partial \Omega)$ is compact, which is the case when the domain has Lipschitz boundary; in other words, in our framework the values of $\sigma$ for which the problem \eqref{steklov} admits nonzero solutions form an increasing sequence of eigenvalues $0=\sigma_0(\Omega)<\sigma_1(\Omega)\leq \sigma_2(\Omega)\leq \cdots \nearrow +\infty$, known as the Steklov spectrum of $\Omega$.\vspace{4px}

We are interested in the first nonzero Steklov eigenvalue, which can be given by a Rayleigh quotient:

$$\sigma_1(\Omega)=\inf\left\{ \frac{\int_\Omega |\nabla u|^2dx}{\int_{\partial \Omega} u^2 d\sigma}\ \ \Big|\ \ u\in H^1(\Omega)\backslash \{0\}\ \text{such that}\ \int_{\partial \Omega} u d \sigma= 0  \right\},$$
where the infimum is attained for the corresponding eigenfunctions.\vspace{5px} 


Among classical questions in spectral geometry, there are the problems of minimizing (or maximizing) the Laplace eigenvalues with various boundary conditions and different geometrical and topological constraints. The constraint of volume has been extensively studied  in the last years. For example, there is the celebrated Faber--Krahn inequality \cite{faber,krahn}, which states that the ball minimizes the first eigenvalue of the Laplacian with Dirichlet boundary condition among domains of fixed volume. There is a similar result for the maximization of the first nonzero eigenvalue of the Laplacian with Neumann boundary {condition} known as the Szeg\"{o}-Weinberger inequality \cite{MR61749,MR79286}. For the Steklov problem, F. Brock proved in \cite{brock} that the first nonzero eigenvalue of a lipschitz domain is less than the eigenvalue of the ball with the same volume.\vspace{5px} 


The perimeter constraint is very interesting to study, especially in the case of Steklov eigenvalues. One early result is due to Weinstock \cite{weinstock}, who used conformal mapping techniques to prove the following inequality for simply connected planar sets:  
$$P(\Omega)\sigma_1(\Omega)\leq P(B) \sigma_1(B),$$
where $P(\Omega)$ {represents} the perimeter of $\Omega$ and $B$ {is} a unit ball. 

Recently, A. Fraser and R. Schoen proved in \cite{fraser} that {for $n\ge 3$,} the ball does not maximize the first nonzero Steklov eigenvalue among all contractible domains of fixed boundary measure in $\mathbb{R}^n$. The proof was inspired from the following formula for the annulus:
$$P(B\backslash  \varepsilon B)^\frac{1}{n-1} \sigma_1(B\backslash  \varepsilon B) = P(B)^\frac{1}{n-1} \sigma_1(B) + \frac{1}{n-1}\varepsilon^{n-1}+ o(\varepsilon^{n-1})> P(B)^\frac{1}{n-1} \sigma_1(B),$$
where $\varepsilon B = \{\varepsilon x\ |\ x\in B\}$.

Note that by studying the variations of the function $\varepsilon \in [0,1]\longmapsto P(B\backslash  \varepsilon B)^\frac{1}{n-1} \sigma_1(B\backslash  \varepsilon B)$ one can prove that there exists a unique $\varepsilon_n\in (0,1)$ such that 
$$\forall \varepsilon \in [0,1),\ \ \ \ \ P(B\backslash  \varepsilon B)^\frac{1}{n-1} \sigma_1(B\backslash  \varepsilon B) \leq P(B\backslash  \varepsilon_n B)^\frac{1}{n-1} \sigma_1(B\backslash  \varepsilon_n B).$$

This motivates to look at the problem of maximizing $\sigma_1$ among domains with holes and wondering if the spherical shell $B\backslash  \varepsilon_n B$ maximizes $\sigma_1$ under perimeter constraint among some class of perforated domains, for example, the doubly connected ones. {Recently}, L. R.  Quinones used shape derivatives  to prove that the annulus $B \backslash \varepsilon_2 B$ is a critical shape of the first nonzero Steklov eigenvalue among planar doubly connected domains with fixed perimeter (see \cite{critical}).\vspace{5px}

In contrast with the result of \cite{fraser}, it was recently proved in \cite{bucur} that the Weinstock inequality is true in higher dimensions in the case of convex sets. Namely, the authors show that for every bounded convex set $\Omega \subset \mathbb{R}^n$, one has
$$P(\Omega)^\frac{1}{n-1}\sigma_1(\Omega)\leq P(B)^\frac{1}{n-1} \sigma_1(B).$$
{We also refer to \cite{MR4037463} for a quantitative version of the latter inequality for convex sets and to \cite{MR4216737} for some surprising stability and instability results in the case of planar simply connected sets.}\vspace{5px}

{
Enlightened with the discussion above, some natural questions arise: can we remove the topological constraints (convexity or simple connectedness) as for the Laplacian eigenvalues with other boundary conditions? Does there exist a domain which maximizes $\sigma_1$ under perimeter constraint? If not, can we determine the supremum of $\sigma_1$ on Lipschitz open sets? These questions have recently been completely settled for the planar case in \cite{new_lagace}, where the authors prove that the supremum of $\sigma_1(\Omega)P(\Omega)$ is given by $8\pi$ and that no maximizer exists. Moreover, they prove that any maximizing sequence $\Omega_m$ will have
an unbounded number of boundary components as $m$ goes to infinity, see \cite[Theorem 1.6 \& Corollary 1.7]{new_lagace}. As far as we know, such problems remain open in higher dimensions $n\ge 3$.}


\subsection{Perforated domains: state of the art}

The optimization of the placement of obstacles has interested many authors in the last decades. We briefly point out some classical and recent works in the topic.\vspace{5px}

Some early results, due to Payne and Weinberger \cite{aaa} on the one hand and Hersch \cite{bbb} on the other, are that for some extremum eigenvalue problems with mixed boundary conditions a certain annulus is the optimal set among multi-connected planar domains, i.e., whose boundary admits more than one component (see also \cite{ccc}). The main ideas consist in constructing judicious test functions by using the notion of web-functions (see \cite{gazolla} for more details on web functions). These ideas were very recently used and adapted for other similar problems (see \cite{indians,Paoli2019SharpEF}).
A classical family of obstacle problems that attracted a lot of attention was to find the best emplacement of a spherical hole inside a ball that optimizes the value of a given spectral functional (see  \cite{survey}, section (9)). An early result in this direction is that the first Dirichlet eigenvalue is maximal when the spherical obstacle is in the center of the larger ball. The proof is based on shape derivatives (see  \cite[Theorem 2.5.1]{henrot}) and on a reflection and domain monotonicity arguments, followed by the use of the boundary maximum principle. These arguments have been applied in greater generality by many authors: in \cite{MR1646670} Ramm and Shivakumar proved this result in dimension 2, in \cite{MR1983689} Kesavan gave a generalization to higher dimensions and showed a similar result for the Dirichlet energy, then Harrell, Kr\"{o}ger, and Kurata managed in \cite{MR1858877} to replace the exterior ball by a convex set which is symetric with respect to a given hyperplane.  In the same spirit, El Soufi and Kiwan proved in \cite{MR2410874} that the second Dirichlet eigenvalue is also maximal when the balls are concentric. Furthermore, many authors considered mixed boundary conditions problems, for instance in \cite{MR2126609}, while studying the internal stabilizability for a reaction–diffusion problem modeling a predator–prey system, the authors are led to consider an obstacle shape optimization problem for the first laplacian eigenvalue with mixed Dirichlet--Neumann boundary conditions. Another interesting work in the same direction is due to Bonder,  Groisman and Rossi, who studied the so called Sobolev trace inequality (see \cite{MR1978428,MR962929}), thus they were interested in the optimization of the first nonzero eigenvalue of an elliptic operator with mixed Dirichlet--Steklov boundary conditions among perforated domains: the existence and regularity of an optimal hole are proved in \cite{MR2408516,MR2193498}, and by using shape derivatives it is shown that annulus is a critical but not an optimum shape (see \cite{MR2408516}). At last, we point out the recent papers \cite{gavitone2021isoperimetric,ii,MR4191500,MR4227150,verma}, where the authors consider the first eigenvalue of the Laplace operator with mixed Dirichlet--Steklov boundary conditions. \vspace{5px}



Many examples stated in the last paragraph deal with linear operators eigenvalues in the special case of doubly connected domains with spherical outer and inner boundaries. The question we are treating in this paper belongs to this family of problems. Yet, it is also natural to seek for generalizations and the literature is quite rich of works treating more general cases: for results on linear operators with more general shapes of the domain and the obstacle in the euclidean case we refer to \cite{MR3470739,MR2368895,MR3787529,HenZuc17a,AFST_2018_6_27_4_863_0}, on the other hand, many results for manifolds were obtained by Anisa and Aithal \cite{MR2120602} in the setting of space-forms (complete simply connected Riemannian manifolds of constant sectional curvature), by Anisa and Vemuri \cite{MR3128768} in the setting of rank 1 symmetric spaces of non-compact type and by Aithal and Raut \cite{MR2945095} in the case of punctured regular polygons in two dimensional space forms. As for the case of non-linear operators we refer to the interesting progress made for the $p$-Laplace operator (see \cite{MR3841846,MR3427602}).



\subsection{Results of the paper}

In this paper, we are interested in finding the optimal placement of a spherical obstacle in a given ball so that the first nonzero Steklov eigenvalue is maximal. \vspace{3px}

Our main result is stated as follows: 

\begin{theorem}\label{main}
Among all doubly connected domains of $\mathbb{R}^n$ ($n\ge2$) of the form $B_1\backslash \overline{B_2}$, where $B_1$ and $B_2$ are open balls of fixed radii such that $\overline{B_2}\subset B_1$, the first nonzero Steklov eigenvalue achieves its maximal value uniquely when the balls are concentric.
\end{theorem}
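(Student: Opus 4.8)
The plan is to set up coordinates so that the larger ball $B_1$ is fixed (say centered at the origin with radius $R$), and to parametrize the position of the inner ball $B_2$ of radius $r$ by the distance $t = |c|$ between the two centers, where $c$ is the center of $B_2$ and $t$ ranges over $[0, R-r)$. Write $\Omega_t = B_1 \setminus \overline{B_2}$ for the resulting domain and consider the function $t \mapsto \sigma_1(\Omega_t)$. Since all these domains are smooth and doubly connected, $\sigma_1$ depends smoothly (or at least continuously) on $t$, and the concentric configuration corresponds to $t=0$. The goal is to show that $t=0$ is the unique global maximizer, which I would attempt by proving that $\sigma_1(\Omega_t)$ is strictly decreasing in $t$.

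First I would compute the shape derivative of $\sigma_1$ with respect to the translation of the inner ball. Assuming $\sigma_1(\Omega_t)$ is simple (which must be checked, at least for the concentric case, where eigenfunctions are explicit via separation of variables into radial functions times spherical harmonics), Hadamard's formula gives the derivative of $\sigma_1$ as a boundary integral over $\partial B_2$ of a quantity involving the normal velocity $V = c/|c| \cdot \nu$ of the perturbation and the eigenfunction $u$ and its gradient. The natural strategy, following the classical Payne--Weinberger / Hersch approach referenced in the introduction, is to show that the concentric configuration is critical (shape derivative vanishes by symmetry) and then to establish monotonicity away from it. A cleaner route, and the one I would prioritize, is to avoid differentiating and instead use the variational (Rayleigh quotient) characterization directly together with a reflection argument.

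The heart of the reflection argument goes as follows. Let $u_t$ be the first nonzero Steklov eigenfunction on $\Omega_t$ for $t > 0$. Choose the hyperplane $H$ through the origin orthogonal to the displacement vector $c$; reflection $\rho$ across $H$ maps $B_1$ to itself but moves $B_2$ farther from or closer to $H$. The idea is to build a test function from $u_t$ by reflecting the eigenfunction across $H$ and using it in the Rayleigh quotient for the concentric (or a less eccentric) domain, exploiting domain monotonicity: the part of $\Omega_t$ on the far side from the obstacle is ``larger'' and the reflection/truncation produces an admissible competitor whose Rayleigh quotient bounds $\sigma_1$. Combined with the boundary maximum principle (Hopf's lemma) to rule out equality unless the reflection is trivial, this should yield strict monotonicity. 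The delicate point here is the orthogonality constraint $\int_{\partial \Omega} u \, d\sigma = 0$ built into the definition of $\sigma_1$: the reflected test function must be corrected to satisfy it, and I would handle this by the standard trick of adding a suitable constant or by a two-dimensional argument choosing a linear combination of the eigenfunction and its reflection that is orthogonal to the constants on the boundary.

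The main obstacle I anticipate is precisely this interaction between the Steklov boundary condition and the reflection argument. Unlike the Dirichlet case, where reflecting the eigenfunction and using domain monotonicity is clean because the boundary condition $u=0$ is preserved and the Rayleigh quotient is purely an interior $L^2$-gradient norm, the Steklov quotient has the boundary measure $\int_{\partial\Omega} u^2 \, d\sigma$ in the denominator, so reflecting across $H$ alters \emph{which} boundary pieces contribute and how the trace norm behaves. Controlling the denominator under reflection, and ensuring the zero-mean constraint is maintained for the competitor, is where the real work lies; I expect one must carefully track the contributions of $\partial B_1$ versus $\partial B_2$ and possibly invoke a monotonicity of the eigenfunction along the axis of displacement (a consequence of Hopf's lemma applied to an antisymmetric difference $u_t - u_t \circ \rho$) to make the comparison go through with strict inequality, thereby pinning down the concentric configuration as the unique maximizer.
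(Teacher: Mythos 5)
There is a genuine gap, and it sits at the very center of your plan: the test-function comparison runs in the wrong direction. Inserting a competitor into a Rayleigh quotient only ever produces an \emph{upper} bound on the eigenvalue of the domain whose quotient you use. To prove $\sigma_1(\Omega_0)>\sigma_1(\Omega_t)$ you must therefore build an admissible test function on the \emph{eccentric} domain $\Omega_t$ and show its quotient lies strictly below $\sigma_1(\Omega_0)$. Your reflection step does the opposite: you take the eigenfunction $u_t$ of $\Omega_t$ and insert its reflection into the Rayleigh quotient of the concentric (or less eccentric) domain, which yields $\sigma_1(\Omega_0)\leq R_{\Omega_0}[\tilde u_t]$ --- an upper bound on exactly the quantity you need to bound from \emph{below}. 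Even in the best case, where the reflection estimate gave $R_{\Omega_0}[\tilde u_t]\leq R_{\Omega_t}[u_t]=\sigma_1(\Omega_t)$, you would have proved $\sigma_1(\Omega_0)\leq\sigma_1(\Omega_t)$, the reverse of the theorem. Two further obstacles are ones you partly anticipate but cannot be repaired within your scheme: the Hadamard/shape-derivative route requires simplicity, and here the first nonzero eigenvalue of the concentric shell has multiplicity $n$ (for $t>0$ rotational symmetry about the axis still forces multiplicity), so $\sigma_1$ is not differentiable in the usual sense; and the Payne--Weinberger/Hersch reflection-plus-Hopf machinery leans on the first Dirichlet eigenfunction having one sign, whereas a first nonzero Steklov eigenfunction necessarily changes sign on the boundary (it is orthogonal to constants), so the antisymmetric difference $u_t-u_t\circ\rho$ has no usable sign structure and the maximum-principle comparison never gets started.

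For contrast, the paper's proof transports an eigenfunction in the correct direction and avoids both derivatives and maximum principles. It first computes the Steklov spectrum of the concentric shell $\Omega_0$ explicitly by separation of variables (Theorem \ref{steklov_eigenvalue}): the first nonzero eigenvalue has multiplicity $n$, with eigenfunctions $u_n^i(x)=x_i\bigl(1+\mu_{\sigma,n}/|x|^n\bigr)$. Taking the displacement along the $x_n$-axis, it chooses $f_n=u_n^{n-1}$, which is odd in $x_{n-1}$; since $\Omega_d$ is symmetric with respect to the hyperplane $\{x_{n-1}=0\}$, the constraint $\int_{\partial\Omega_d}f_n\,d\sigma=0$ holds automatically --- this is how the paper sidesteps the mean-zero issue you proposed to patch by adding constants (which is admissible but weakens the bound). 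Explicit computations in spherical coordinates (Lemmas \ref{l1} and \ref{l2}, using the radius function $R_d(\theta_1)=d\cos\theta_1+\sqrt{1-d^2\sin^2\theta_1}$ and Wallis integrals) then show that the Dirichlet energy does not increase and the boundary $L^2$-norm does not decrease when passing from $\Omega_0$ to $\Omega_d$, each with equality only at $d=0$, giving
$\sigma_1(\Omega_d)\leq \int_{\Omega_d}|\nabla f_n|^2dx \big/ \int_{\partial\Omega_d}f_n^2d\sigma < \int_{\Omega_0}|\nabla f_n|^2dx \big/ \int_{\partial\Omega_0}f_n^2d\sigma = \sigma_1(\Omega_0)$.
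If you want to salvage your approach, the minimal fix is to reverse the roles: use the explicit concentric eigenfunction as the competitor on $\Omega_t$, which is precisely what the paper does.
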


In \cite{verma}, the authors consider a mixed Dirichlet--Steklov eigenvalue problem. They prove that  the first nonzero eigenvalue is maximal when the balls are concentric in dimensions larger or equal than $3$ (see \cite[Theorem 1]{verma}) and remark that the planar case remains open (see \cite[Remark 2]{verma}). We show that the ideas developed in this paper allow us to give an alternative and simpler proof of  \cite[Theorem 1]{verma}. Then we extend this result to the planar case. 
\begin{theorem}\label{main1}
Among all doubly connected domains of $\mathbb{R}^n$ ($n\ge2$) of the form $B_1\backslash \overline{B_2}$, where $B_1$ and $B_2$ are open balls of fixed radii such that $\overline{B_2}\subset B_1$, the first nonzero eigenvalue of the problem
$$\begin{cases}
    \Delta u = 0 &\qquad\mbox{in $B_1\backslash \overline{B_2}$,}\vspace{1mm} \\
    \ \ \ u = 0& \qquad\mbox{on $\partial B_2$,} \vspace{1mm} \\
    \   \frac{\partial u}{\partial n} = \tau u& \qquad\mbox{on $\partial B_1$.}
\end{cases}$$
 achieves its maximal value uniquely when the balls are concentric.
\end{theorem}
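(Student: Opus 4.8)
The plan is to prove Theorem~\ref{main1} by the same variational transplantation that drives Theorem~\ref{main}, exploiting that for the mixed problem the first eigenfunction of the concentric configuration is radial. Place $B_1=B(0,R_1)$ and write the inner ball as $B_2=B(p,R_2)$ with $|p|+R_2<R_1$, so the concentric domain is $\Omega_0=B_1\setminus\overline{B(0,R_2)}$. Since the Dirichlet condition on $\partial B_2$ removes the constants, $\tau_1$ admits the unconstrained Rayleigh characterization $\tau_1(\Omega)=\inf\{\int_\Omega|\nabla u|^2\,/\int_{\partial B_1}u^2 : u\in H^1(\Omega)\setminus\{0\},\ u|_{\partial B_2}=0\}$, so it suffices to exhibit, for every off-center $\Omega$, one admissible test function whose quotient is $\le\tau_1(\Omega_0)$, with strict inequality unless $p=0$. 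First I would identify the first eigenfunction of $\Omega_0$: separating variables, the candidate eigenvalues $\tau^{(k)}$ come from the degree-$k$ spherical harmonics, and a direct comparison of the explicit ODE solutions shows the radial mode $k=0$ is the lowest. Its profile is the radial harmonic function $\psi$ with $\psi(R_2)=0$ and $\psi'(R_1)=\tau_1(\Omega_0)\psi(R_1)$, namely $\psi(r)=b(r^{2-n}-R_2^{2-n})$ for $n\ge3$ and $\psi(r)=b\log(r/R_2)$ for $n=2$; in both cases $\psi$ is positive and increasing and $x\mapsto\psi(|x|)$ is harmonic away from the hole.

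The test function on the eccentric domain is the translate $v(x)=\psi(|x-p|)$. It lies in $H^1(\Omega)$, vanishes on $\partial B_2$ because $\psi(R_2)=0$, and is harmonic on $\Omega$ since $\Omega\subset\{|x-p|>R_2\}$. Hence $\tau_1(\Omega)\le \int_\Omega|\nabla v|^2/\int_{\partial B_1}v^2$, and the whole proof reduces to comparing this quotient, written in the shifted variable $y=x-p$, with the concentric quotient $\tau_1(\Omega_0)=\int_{\Omega_0}|\nabla\psi|^2/\int_{\partial B_1}\psi^2$. I would treat numerator and denominator separately and show that translating the hole moves them in opposite, favorable directions: the numerator is maximized and the denominator minimized at the concentric configuration.

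For the numerator, $\int_\Omega|\nabla v|^2=\int_{B(-p,R_1)\setminus B(0,R_2)}\psi'(|y|)^2\,dy$. The integrand $\psi'(|y|)^2$ is radially decreasing, the excluded inner ball is fixed and centered, and only the outer ball moves; a rearrangement (equivalently, monotonicity of the convolution of two radially decreasing functions, after truncating $\psi'^2$ on the hole to keep it integrable) shows $\int_{B(-p,R_1)}\psi'(|y|)^2\,dy$ is maximized at $p=0$, whence the numerator is too. For the denominator, $\int_{\partial B_1}v^2=\int_{\partial B(-p,R_1)}\psi(|y|)^2\,d\sigma(y)$, and the key observation is that $\psi(|y|)$ is, up to an additive constant, a multiple of the fundamental solution centered at the origin. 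Since the origin lies inside $B(-p,R_1)$, Newton's shell theorem gives that the spherical average of $\psi(|y|)$ over $\partial B(-p,R_1)$ equals exactly $\psi(R_1)$, independent of $p$; Jensen's inequality (nonnegativity of the variance) then yields $\langle\psi^2\rangle\ge\langle\psi\rangle^2=\psi(R_1)^2$, so the denominator is minimized at $p=0$. Combining, $\int_\Omega|\nabla v|^2/\int_{\partial B_1}v^2\le\int_{\Omega_0}|\nabla\psi|^2/\int_{\partial B_1}\psi^2=\tau_1(\Omega_0)$, with both steps strict unless $p=0$, giving uniqueness of the concentric maximizer. Because $\psi$, its harmonicity, and the Newton average are all available verbatim for $n=2$ (with $\log$ replacing $r^{2-n}$), the argument covers the planar case unchanged, which is precisely where it extends \cite[Theorem~1]{verma}.

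I expect the main obstacle to be structural rather than computational. The genuinely nontrivial fact is that the spherical average of the transplanted profile over the off-center outer sphere is left unchanged: this Newton/mean-value identity is what aligns the two monotonicities, and it must be verified carefully, including that the off-center sphere never reaches the singularity, which is guaranteed by $|p|+R_2<R_1$. The second point requiring care, though elementary, is confirming that the radial mode really is the first eigenvalue of the concentric problem for all admissible radii and all $n\ge2$, i.e.\ that $\tau^{(0)}<\tau^{(k)}$ for $k\ge1$, since the entire comparison is anchored to the identity that $\psi$ realizes $\tau_1(\Omega_0)$ and would otherwise only yield a bound by $\tau^{(0)}$.
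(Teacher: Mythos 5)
Your proposal is correct, and its skeleton coincides with the paper's: the paper's Proposition \ref{propo} is exactly your three-step plan (transplant the radial first eigenfunction of the concentric shell as a test function on the eccentric domain, show the Dirichlet energy does not increase and the boundary integral over the outer sphere does not decrease), and your numerator argument — rearrangement of the radially decreasing $|\nabla\psi|^2$, truncated on the fixed centered hole to restore integrability — is the same monotonicity observation the paper borrows from \cite{verma}. Where you genuinely depart from the paper is in the boundary inequality (assertion 3 of Proposition \ref{propo}). For $n\ge3$ the paper proves it by expanding $g_n^2$ in spherical coordinates and invoking Lemma \ref{l2}: translation-invariance of $V_1^{n-2}$, the symmetry computation $V_2^{n-2}=0$, and a calculus monotonicity argument for $V_3^{n-2}$; for $n=2$ it runs a separate argument through Jensen's formula and a power-series identity. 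You replace all of this by two dimension-free facts: Newton's shell theorem, which gives that the average of $\psi$ (an affine function of the fundamental solution centered at the hole's center) over the off-center outer sphere is exactly $\psi(R_1)$, valid because the singularity stays strictly inside that sphere thanks to $|p|+R_2<R_1$; and Jensen/Cauchy--Schwarz, which then yields $\langle\psi^2\rangle\ge\langle\psi\rangle^2=\psi(R_1)^2$ with equality iff $\psi$ is constant on the shifted sphere, i.e.\ iff $p=0$. This is a genuine simplification: it subsumes the paper's lemmas (applied to $\psi=|y|^{2-n}$ it literally reproduces $V_3^{n-2}(d)\ge V_3^{n-2}(0)$, and the vanishing cross term is the Newton identity itself), it treats $n=2$ and $n\ge3$ uniformly (the paper's planar proof is your argument in disguise, since Jensen's formula is the planar Newton identity), and it explains conceptually why concentricity is extremal. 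What the paper's route buys instead is economy within the article: the $V_i$ machinery was already built for Theorem \ref{main}, and it works for test functions that are not affine in the fundamental solution (the pure Steklov eigenfunctions $u_n^i$), whereas your Newton-plus-Jensen trick is specific to the mixed Dirichlet--Steklov problem. One caveat: like the paper, your whole comparison is anchored on the fact that the first eigenfunction of the concentric shell is radial; the paper imports this from \cite[Section 2.1]{verma}, while you only assert it via "direct comparison of the explicit ODE solutions," so to be complete you should either carry out that mode comparison (it is true for all $n\ge2$) or cite it as the paper does.
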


This paper is organized in 3 parts. First, we give the proof of Theorem \ref{main}. Then we use the ideas developed in Section \ref{s} to give a new proof of \cite[Theorem 1]{verma} and tackle the planar case which was up to our knowledge still open. Finally, Section \ref{s:appendix} is devoted to the computation of the Steklov eigenvalues and eigenfunctions of the spherical shell and the determination of the first nonzero Steklov eigenvalue (Theorem \ref{steklov_eigenvalue}) via a monotonicity result (Lemma \ref{monotonicity}).




\section{Proof of Theorem \ref{main}}\label{s}
By invariance with respect to rotations and translations and scaling properties of $\sigma_1$, we can reformulate the problem as follows: 

We assume that the obstacle $B_2$ is the open ball of radius $a\in(0,1)$ centred at the origin $O$ and $B_1 = y_d + B$, where $B$ is the unit ball centred at the origin $O$, $y_d := (0,...,0,d)\in \mathbb{R}^n$ and $d\in[0,1-a)$. What is the value of $d$ such that $\sigma_1(B_1\backslash \overline{B_2})$ is maximal?\vspace{5px}  

For every $d\in[0,1-a)$, we take $\Omega_d:= (y_d + B)\backslash a B $ (see Figure \ref{domaine}). \vspace{3px}

It is sufficient to prove that: 
$$\forall d\in(0,1-a),\ \ \ \ \ \ \sigma_1(\Omega_0)>\sigma_1(\Omega_d).$$

The proof is based on the following Proposition:

\begin{proposition}\label{prop_main}
There exists a function $f_n\in H^1(\mathbb{R}^n\backslash \overline{B_2})$  satisfying: 
\begin{enumerate}
    \item $f_n$ is an eigenfunction associated to $\sigma_1(\Omega_0)$ and can be used as a test function in the variational definition of $\sigma_1(\Omega_d)$. \vspace{4px}
    \item $\int_{\Omega_d} |\nabla f_n|^2dx \leq \int_{\Omega_0} |\nabla f_n|^2dx$, with equality if and only if $d=0$. \vspace{4px}
    \item $\int_{\partial\Omega_d} f_n^2d\sigma \ge \int_{\partial\Omega_0} f_n^2d\sigma$, with equality if and only if $d=0$. 
\end{enumerate}
\end{proposition}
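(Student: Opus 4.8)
The plan is to produce $f_n$ explicitly as a first-degree (dipole-type) Steklov eigenfunction of the concentric shell $\Omega_0$, chosen in a direction orthogonal to the displacement, and then to compare the three quantities on $\Omega_d$ and $\Omega_0$ by reflecting across the hyperplane equidistant from the two positions of the outer centre. From the separation of variables carried out in the appendix (Theorem \ref{steklov_eigenvalue}) I would record that the eigenspace of $\sigma_1(\Omega_0)$ is spanned by the $n$ functions $g(|x|)\,x_i/|x|$, $i=1,\dots,n$, with $g(r)=\alpha r+\beta r^{1-n}$ and $\alpha,\beta$ fixed by the Steklov conditions on $\{|x|=a\}$ and $\{|x|=1\}$. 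Since $y_d=(0,\dots,0,d)$, the domain $\Omega_d$ is invariant under $x_1\mapsto -x_1$, so I would take $f_n(x)=g(|x|)\,x_1/|x|$, i.e. the eigenfunction transverse to the displacement. It lies in $H^1(\mathbb{R}^n\setminus\overline{B_2})$, is harmonic off the origin, and is odd under $x_1\mapsto -x_1$; hence $\int_{\partial\Omega_d}f_n\,d\sigma=0$, which is exactly the admissibility asserted in item (1). (The eigenfunction aligned with $e_n$ is \emph{not} admissible: its integral over the displaced outer sphere equals $\alpha d\,|\mathbb{S}^{n-1}|\neq 0$.)

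For items (2) and (3) I would use the reflection $\rho$ across the radical hyperplane $H_d=\{x_n=d/2\}$, which is equidistant from the two centres $0$ and $y_d$, exchanges the balls $B$ and $y_d+B$, and preserves Lebesgue and surface measure. With the crescents $U=(y_d+B)\setminus B$ and $L=B\setminus(y_d+B)$ one has $\rho(U)=L$, the obstacle $\overline{aB}$ lies in the common lens, and the domains agree outside $U\cup L$, so
\begin{equation*}
\int_{\Omega_d}|\nabla f_n|^2\,dx-\int_{\Omega_0}|\nabla f_n|^2\,dx=\int_{U}\Big(|\nabla f_n(x)|^2-|\nabla f_n(\rho x)|^2\Big)\,dx .
\end{equation*}
A short computation gives $|\nabla f_n|^2=g'(r)^2\,\tfrac{x_1^2}{r^2}+\tfrac{g(r)^2}{r^2}\big(1-\tfrac{x_1^2}{r^2}\big)$ with $r=|x|$; since $\rho$ fixes $x_1$ and every $x\in U$ satisfies $x_n>d/2$, hence $|x|>|\rho x|$, item (2) reduces to the monotonicity in $r$ (at fixed $x_1$) of this expression. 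Likewise, since $\rho$ maps $\partial B$ onto $\partial(y_d+B)$,
\begin{equation*}
\int_{\partial\Omega_d}f_n^2\,d\sigma-\int_{\partial\Omega_0}f_n^2\,d\sigma=\int_{\partial(y_d+B)}\Big(f_n(x)^2-f_n(\rho x)^2\Big)\,d\sigma ,
\end{equation*}
so item (3) reduces to the behaviour of $r\mapsto \Psi(r):=g(r)^2/r^2$.

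The main obstacle is the sign analysis forced by these identities, and item (3) is the delicate one. For item (2) the integrand is signed pointwise: inserting $g(r)=\alpha r+\beta r^{1-n}$ one checks that $r\mapsto|\nabla f_n|^2$ is decreasing at fixed $x_1$, so the integrand is $\le 0$ on $U$, strictly on a set of positive measure once $d>0$. For item (3), however, the reflected integrand changes sign on $\partial(y_d+B)$ — negative on the upper cap, positive on the lower one — so a pointwise comparison fails and one must show the net contribution is nonnegative. I would do this by passing to $\omega\in\mathbb{S}^{n-1}$ and studying $F(d)=\int_{\mathbb{S}^{n-1}}\omega_1^2\,\Psi\big(\sqrt{1+2d\omega_n+d^2}\big)\,d\omega$, symmetrising in $\omega_n$ and verifying $F'(0)=0$, $F''(0)>0$, and more generally that $F$ is strictly increasing on $(0,1-a)$. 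Once items (1)–(3) are in place, the theorem follows from the variational principle: $f_n$ being admissible for $\sigma_1(\Omega_d)$,
\begin{equation*}
\sigma_1(\Omega_d)\ \le\ \frac{\int_{\Omega_d}|\nabla f_n|^2\,dx}{\int_{\partial\Omega_d}f_n^2\,d\sigma}\ \le\ \frac{\int_{\Omega_0}|\nabla f_n|^2\,dx}{\int_{\partial\Omega_0}f_n^2\,d\sigma}\ =\ \sigma_1(\Omega_0),
\end{equation*}
with strict inequality for every $d\in(0,1-a)$.
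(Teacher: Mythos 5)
Your item (1) is exactly the paper's argument (the paper takes the eigenfunction $u_n^{n-1}$ transverse to the displacement; your $e_1$-direction choice is the same idea), and both reflection identities across $\{x_n=d/2\}$ are correct. The gap is in your sign analysis for item (2): the claim that $r\mapsto|\nabla f_n|^2$ is decreasing at fixed $x_1$, hence that the integrand is $\le 0$ pointwise on $U$, is false. For $n=2$, with $g(r)=r+\mu r^{-1}$ and $\mu=\mu_{\sigma,2}=\frac{1-\sigma_1(\Omega_0)}{1+\sigma_1(\Omega_0)}\in(0,1)$, one computes in Cartesian coordinates
\[
|\nabla f_2|^2 \;=\; 1+\frac{2\mu\,(x_2^2-x_1^2)}{|x|^4}+\frac{\mu^2}{|x|^4}.
\]
Take $x=(1,d)$ (a boundary point of $U$; nearby interior points of $U$ behave identically by continuity), so $\rho x=(1,0)$. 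Then
\[
|\nabla f_2(x)|^2-|\nabla f_2(\rho x)|^2
\;=\;\frac{\mu d^2}{(1+d^2)^2}\Bigl(6+2d^2-\mu(2+d^2)\Bigr)\;>\;0
\]
for every $d\in(0,1-a)$, since $\mu<1$. The culprit is the cross term $2\mu(x_2^2-x_1^2)/|x|^4$ coming from the interaction of the linear part $x_1$ with the dipole part $\mu x_1/|x|^n$: at fixed $x_1$ near the $x_1$-axis it is negative and \emph{increasing} in $r$, which overturns the monotonicity of the remaining terms. Item (2) is true only after integration: in the paper's proof the total contribution of precisely this cross term, $2\mu_{\sigma,n}W_2^n(d)$, is shown to vanish for every $d$ via the identity $R_d(\theta_1)R_d(\pi-\theta_1)=1-d^2$ and the Wallis recursion $nI_n=(n-1)I_{n-2}$, while the other two pieces are handled by translation invariance of the volume ($W_1^n$) and a monotonicity argument for $W_3^n$, which enters with the negative coefficient $-\mu_{\sigma,n}^2/n$. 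None of this cancellation is visible pointwise, so your reduction of item (2) to a monotonicity check cannot be repaired as stated.

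For item (3) you rightly observe that a pointwise (even symmetrized) comparison fails — indeed at $\omega_n=0$ one has $2\Psi(\sqrt{1+d^2})<2\Psi(1)$ since $\Psi$ is decreasing — but your proposed remedy, "verify that $F$ is strictly increasing on $(0,1-a)$," is the entire difficulty restated, not an argument. Proving it is what the paper's Lemma on $V_1^n,V_2^n,V_3^n$ accomplishes: $V_1^n(d)$ is constant in $d$ (interpreted as a perimeter of a translated ball in $\mathbb{R}^{n+2}$), $V_2^n(d)\equiv 0$ by antisymmetry, and $V_3^n(d)$ is bounded below by a function shown to be increasing in $d$; all three decompositions are needed because the separate pieces of the integrand are not signed. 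So, as it stands, the proposal establishes item (1), rests item (2) on a false pointwise claim, and leaves item (3) unproved.
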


Using Proposition \ref{prop_main}, we conclude as follows: 

$$\forall d\in(0,1-d),\ \ \ \ \ \sigma_1(\Omega_d)\leq\frac{\int_{\Omega_d} |\nabla f_n|^2dx}{\int_{\partial\Omega_d} f_n^2d\sigma}<\frac{\int_{\Omega_0} |\nabla f_n|^2dx}{\int_{\partial\Omega_0} f_n^2d\sigma}=\sigma_1(\Omega_0).$$ 

This proves Theorem \ref{main}.

\subsection{Proof of the first assertion of Proposition \ref{prop_main}}\label{first}
The first eigenvalue of the spherical shell $\Omega_0$ is computed in Theorem \ref{steklov_eigenvalue}. It is also proven that its multiplicity is equal to $n$ and the corresponding eigenfunctions are 
$$\begin{array}{ccccc}
u^i_n & : & \mathbb{R}^n & \longrightarrow & \mathbb{R} \\
 & & x=(x_1,\cdots,x_n) & \longmapsto & x_i\left(1+\frac{\mu_{\sigma,n}}{|x|^n}\right), \\
\end{array}$$
where $i\in \llbracket 1,n\rrbracket$ and $\mu_{\sigma,n} = \frac{1-\sigma_1(\Omega_0)}{n+\sigma_1(\Omega_0)-1}$.\vspace{4px}

Take $i\in \llbracket 1,n-1\rrbracket$. Since $\Omega_d$ is symmetrical to the hyperplane $\{x_i = 0\}$, we have
\begin{align*}
\int_{\partial \Omega_d}u^i_n d\sigma &\ \ =\ \int_{\partial \Omega_d\cap \{x_i \ge 0\}}u^i_n d\sigma + \int_{\partial \Omega_d\cap \{x_i \leq 0\}}u^i_n d\sigma   \\
&\ \ =\ \int_{\partial \Omega_d\cap \{x_i \ge 0\}}u^i_n d\sigma - \int_{\partial \Omega_d\cap \{x_i \ge 0\}}u^i_n d\sigma \ \ \ \ \ \text{(because $u_n^i(x_1,\dots,-x_i,\cdots,x_n)=-u_n^i(x_1,\dots,x_i,\dots,x_n)$)} \\
&\ \ =\  0.
\end{align*}

Thus, every eigenfunction $u_n^i$ (where $i\in \llbracket 1,n-1\rrbracket$) can be taken as a test function in the variational definition of $\sigma_1(\Omega_d)$ (note that this is not the case for $u_n^n$). This proves the first assertion of Proposition \ref{prop_main}.

\subsection{Spherical coordinates and preliminary computations}
Since the shapes considered are described by spheres, it is more convenient to work with the spherical coordinates instead of the Cartesian ones. \vspace{5px}

We set
$$\left\{
\begin{array}{l}
  x_1 = r\sin{\theta_1}\sin{\theta_1}\dots \sin{\theta_{n-2}}\sin{\theta_{n-1}},\vspace{2px}\\
  x_2 = r\sin{\theta_1}\sin{\theta_1}\dots \sin{\theta_{n-2}}\cos{\theta_{n-1}},\vspace{2px}\\
  \ \ \ \ \ \ \ \ \vdots\\
  x_{n-1} = r\sin{\theta_1}\cos{\theta_2},\vspace{2px}\\
  x_{n} = r\cos{\theta_1},
\end{array}
\right.$$
where $(r,\theta_1,\cdots,\theta_{n-1})\in \mathbb{R}^+\times[0,\pi]\times...\times[0,\pi]\times[0,2\pi]$.\vspace{5px}

Since, every eigenfunction $u_n^i$ (where $i\in \llbracket 1,n-1\rrbracket$) can be used as a test function in the variational definition of $\sigma_1(\Omega_d)$, we chose to take $f_n = u_n^{n-1}$ (see Remark \ref{why_test}). \vspace{5px}

Using spherical coordinates, we write
$$\begin{array}{ccccc}
f_2 & : & \mathbb{R}_+\times [0,2\pi] & \longrightarrow & \mathbb{R} \\
 & & (r,\theta_1) & \longmapsto & \sin{\theta_1}\left(r+\frac{\mu_{\sigma,n}}{r}\right), \\
\end{array}$$
and for $n\ge 3$
$$\begin{array}{ccccc}
f_n & : & \mathbb{R}_+\times[0,\pi]\times\cdots\times [0,\pi]\times [0,2\pi] & \longrightarrow & \mathbb{R} \\
 & & (r,\theta_1,\dots,\theta_{n-1}) & \longmapsto & \sin{\theta_1}\cos{\theta_2}\left(r+\frac{\mu_{\sigma,n}}{r^{n-1}}\right). \\
\end{array}$$

\begin{remark}\label{why_test}
The choice of the test function $f_n=u^{n-1}_n$ between all $u^i_n$ ($i\in \llbracket 1,n-1\rrbracket$) is motivated by the will to have less variables to deal with while computing the gradient (see Section \ref{gradient}). Nevertheless, one should note that all these functions satisfy the three assertions of Proposition \ref{prop_main}.\vspace{4px} 
\end{remark}

The following figure shows the perforated domains $\Omega_0$ and $\Omega_d$, the angle $\theta_1$ and the radius $R_d(\theta_1)$ which plays an important role in the upcoming computations. 

\begin{center}
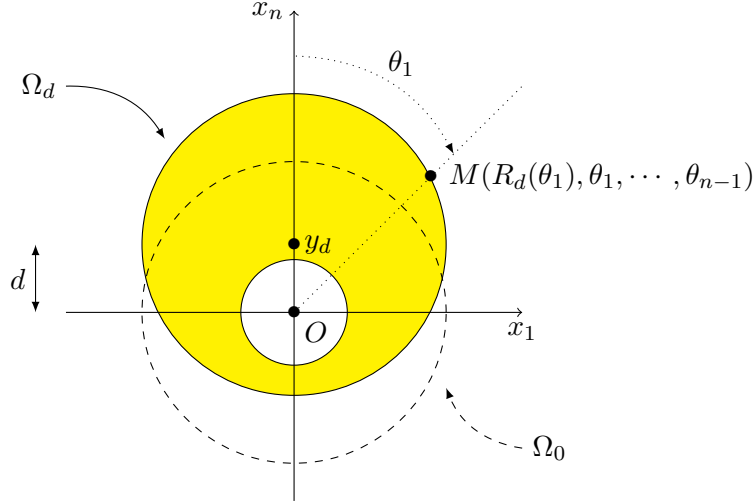
\label{domaine}
\begin{tikzpicture}

\draw[fill=yellow] (0,0.9) circle (2);
\draw[fill=white] (0,0) circle (.7);
\draw[dashed] (0,0) circle (2);
\draw (0,0) node {$\bullet$};
\draw (0,0.9) node {$\bullet$};
\draw (0,0.9) node[right] {$y_d$};
\draw[<->,>=latex]  (-3.4,0) to (-3.4,.9);
\draw (-3.4,0.45) node[left] {$d$};
\draw (0,0) node[below right] {$O$};
\draw (3,0) node[below] {$x_1$};
\draw (0,4) node[left] {$x_{n}$};
\draw[->] (-3,0)--(3,0);
\draw[dotted] (0,0)--(3,3);
\draw[->] (0,-2.5)--(0,4);
\draw (1.8,1.8) node {$\bullet$};
\draw (1.9,1.8) node[right] {$M(R_d(\theta_1),\theta_1,\cdots,\theta_{n-1})$};
\draw[<-,>=latex] [dotted] (2.1,2.1) to[bend right] (0,3.4);
\draw[->,>=latex]  (-3,3) to[bend left] (-1.7,2.3);
\draw[->,>=latex] [dashed] (3,-1.8) to[bend left] (2,-1);
\draw (3,-1.8) node[right] {$\Omega_0$};
\draw (-3,3) node[left] {$\Omega_d$};
\draw (1.1,3.3) node[right] {$\theta_1$};
\end{tikzpicture}
\captionof{figure}{The domains $\Omega_d$ and $\Omega_0$}
\end{center}

Let $M\in\partial(y_d+B)$, by using Al-Kashi's formula on the triangle $Oy_dM$,  we have
$$1^2=d^2+R_d^2(\theta_1)-2d R_d(\theta_1) \cos{\theta_1}.$$

By solving the equation of second degree satisfied by $R_d(\theta_1)$ we get two roots $d\cos{\theta_1} \pm \sqrt{1-d^2\sin^2 \theta_1}$. Since the smallest one is negative (due to the fact that $d\in [0,1)$), we deduce that 
$$ R_d(\theta_1) =  d\cos{\theta_1} + \sqrt{1-d^2\sin^2 \theta_1}.$$

We compute the first derivative of $R_d$, which appears in the area element when integrating on $\partial \Omega_d$ (more precisely on $\partial(y_d + B)$). 
$$  {R_d'}(\theta_1) = -d\sin{\theta_1} -\frac{d^2\sin{\theta_1}\cos{\theta_1}}{\sqrt{1-d^2\sin^2\theta_1}}=-\frac{d\sin{\theta_1}}{\sqrt{1-d^2\sin^2\theta_1}}\big(d\cos{\theta_1} + \sqrt{1-d^2\sin^2 \theta_1}\big). $$



With straightforward computations, we get the important equalities: 
\begin{equation}\label{equa}
\sqrt{R_d^2(\theta_1)+{R_d'}^2(\theta_1)}=1+\frac{d\cos{\theta_1}}{\sqrt{1-d^2\sin^2\theta_1}}=\frac{R_d(\theta_1)}{\sqrt{1-d^2\sin^2\theta_1}}\cdot
\end{equation}

\subsection{Proof of the second assertion of Proposition \ref{prop_main}}\label{gradient}
We compute the gradient of $f_n$ in the spherical coordinates and calculate the $L^2$-norm of its gradient on $\Omega_d$.\\ 

For $n=2$, we have
\[
\nabla f_2(r,\theta_1)
=
\begin{bmatrix}
    \frac{\partial f}{\partial r} 
    \\
    \frac{1}{r}\frac{\partial f}{\partial \theta_1}   
\end{bmatrix}
=
\begin{bmatrix}
    \sin{\theta_1}\left(1-\frac{\mu_{\sigma,n}}{r^2}\right)
    \\
    \cos{\theta_1}\left(1+\frac{\mu_{\sigma,n}}{r^2}\right)
\end{bmatrix},
\]
thus
\begin{align*}
\int_{\Omega_d} |\nabla f_2|^2 d x &\ \ =\ \int_{\theta_1=0}^{2\pi} \int_{r=a}^{R_d(\theta_1)} \left[\sin^2\theta_1\left(1-\frac{\mu_{\sigma,2}}{r^2}\right)^2 + {\cos^2\theta_1\left(1+\frac{\mu_{\sigma,2}}{r^2}\right)^2}\right]rdrd\theta_1 \\
&\ \ =\ \int_{\theta_1=0}^{2\pi} \int_{r=a}^{R_d(\theta_1)} \left[r+2\mu_{\sigma,2}\left(\cos^2\theta_1-\sin^2\theta_1\right)\frac{1}{r}+\frac{\mu_{\sigma,2}^2}{r^3}\right]dr d\theta_1\\
&\ \ =\ \int_{\theta_1=0}^{2\pi} \left(\frac{R_d^2(\theta_1)-a^2}{2}{+2\mu_{\sigma,2} \left(\cos^2\theta_1-\sin^2\theta_1\right)\ln{\left(\frac{R_d(\theta_1)}{a}\right)}}-\frac{\mu_{\sigma,2}^2}{2}\left(\frac{1}{R_d^2(\theta_1)}-\frac{1}{a^2}\right)\right)d\theta_1.
\end{align*}

In the same spirit, for $n\ge 3$, we have
\[
\nabla f_n(r,\theta_1,...,\theta_{n-1})
=
\begin{bmatrix}
    \frac{\partial f}{\partial r} 
    \\
    \frac{1}{r}\frac{\partial f}{\partial \theta_1}   
    \\
    \frac{1}{r\sin{\theta_1}}\frac{\partial f}{\partial \theta_2} 
    \\
     \frac{1}{r\sin{\theta_1}\sin{\theta_2}}\frac{\partial f}{\partial \theta_3} 
    \\
    \vdots 
    \\
 \frac{1}{r\sin{\theta_1}...\sin{\theta_{n-2}}}\frac{\partial f}{\partial \theta_{n-1}}
\end{bmatrix}
=
\begin{bmatrix}
    \sin{\theta_1}\cos{\theta_2}\left(1-\frac{(n-1)\mu_{\sigma,n}}{r^n}\right)
    \\
    \cos{\theta_1}\cos{\theta_2}\left(1+\frac{\mu_{\sigma,n}}{r^n}\right)
    \\
    -\sin{\theta_2} \left(1+\frac{\mu_{\sigma,n}}{r^n}\right)
    \\
    0
    \\
    \vdots 

    \\
    0
\end{bmatrix}.
\]

For $p\in \mathbb{N}$, we introduce $I_p:=\int_0^\pi\sin^pt dt$, which is the double of the classical Wallis integral. These integrals satisfy the essential recursive property 
\begin{equation}\label{recursive}
\forall p \in \mathbb{N},\ \ \ \ I_{p+2}=\frac{p+1}{p+2}I_p.
\end{equation}
{
Moreover, since $I_0=\pi$ and $I_1= 2$, it is classical to use the recursive property \eqref{recursive} to compute the values of all the other integrals. We have
\begin{equation}\label{wallis_values}
\forall k\in\mathbb{N},\ \ \ \ \ \ \ \left\{\begin{matrix}  I_{2k}=\pi \frac{(2k)!}{\left(2^k k!\right)^2}=\frac{\pi}{2^{2k}}\binom{2k}{k},
\vspace{5px}
\\
I_{2k+1} = 2\cdot \frac{\left(2^k k!\right)^2}{(2k+1)!}.
\end{matrix}\right.
\end{equation}
}

We compute \smaller\smaller
\begin{align*}
A_1^n(d) &\ \ =\ \int_{{\Omega_d}} [\nabla f_n]_1^2 dx\\
&\ \ =\ 2\int_{\theta_1=0}^\pi...\int_{\theta_{n-1}=0}^{\pi}\int_{r=a}^{R_d(\theta_1)} \sin^2{\theta_1}\cos^2{\theta_{2}}\left(1-\frac{(n-1)\mu_{\sigma,n}}{r^n}\right)^2r^{n-1} \prod_{i=1}^{n-2}\sin^{n-1-i}\theta_idr d\theta_1...d\theta_{n-1}\\
&\ \ =\ 2\left(\prod_{k=0}^{n-4}I_k\right) \int_{\theta_{2}=0}^\pi \cos^2\theta_2\sin^{n-3}\theta_2 d\theta_{2} \int_{\theta_1=0}^\pi \sin^n\theta_1  \int_{r=a}^{R_d(\theta_1)}\left(r^{n-1}-\frac{2(n-1)\mu_{\sigma,n}}{r}+\frac{(n-1)^2\mu_{\sigma,n}^2}{r^{n+1}}\right)dr d\theta_1 \\
&\ \ =\ 2\left(\prod_{k=0}^{n-4}I_k\right)(I_{n-3}-I_{n-1})  \int_{\theta_1=0}^\pi \sin^n\theta_1 \left(\frac{R_d^n(\theta_1)-a^n}{n}-2(n-1)\mu_{\sigma,n} \ln{\left(\frac{R_d(\theta_1)}{a}\right)}-\frac{(n-1)^2\mu_{\sigma,n}^2}{n}\left( \frac{1}{R_d^n(\theta_1)}-\frac{1}{a^n}\right)\right)d\theta_1 \\
&\ \ =\ \frac{2}{n-1}\left(\prod_{k=0}^{n-3}I_k\right) \int_{\theta_1=0}^\pi \sin^n\theta_1 \left(\frac{R_d^n(\theta_1)-a^n}{n}-2(n-1)\mu_{\sigma,n} \ln{\left(\frac{R_d(\theta_1)}{a}\right)}-\frac{(n-1)^2\mu_{\sigma,n}^2}{n}\left( \frac{1}{R_d^n(\theta_1)}-\frac{1}{a^n}\right)\right)d\theta_1, 
\end{align*}\larger \larger
where we used \eqref{recursive} for the last equality. \smaller\smaller
\begin{align*}
A_2^n(d) &\ \ =\ \int_{{\Omega_d}} [\nabla f_n]_2^2 d x\\
&\ \ =\ 2\left(\prod_{k=0}^{n-4}I_k\right)(I_{n-3}-I_{n-1}) \int_{\theta_1=0}^\pi \cos^2\theta_1 \sin^{n-2}\theta_1 \left(\frac{R_d^n(\theta_1)-a^n}{n}+2\mu_{\sigma,n} \ln{\left(\frac{R_d(\theta_1)}{a}\right)}-\frac{\mu_{\sigma,n}^2}{n}\left( \frac{1}{R_d^n(\theta_1)}-\frac{1}{a^n}\right)\right)d\theta_1\\
&\ \ =\ \frac{2}{n-1}\left(\prod_{k=0}^{n-3}I_k\right) \int_{\theta_1=0}^\pi  (\sin^{n-2}\theta_1-\sin^{n}\theta_1) \left(\frac{R_d^n(\theta_1)-a^n}{n}+2\mu_{\sigma,n} \ln{\left(\frac{R_d(\theta_1)}{a}\right)}-\frac{\mu_{\sigma,n}^2}{n}\left( \frac{1}{R_d^n(\theta_1)}-\frac{1}{a^n}\right)\right)d\theta_1,
\end{align*}
\larger\larger
then
\begin{align*}
A_3^n(d) &\ \ =\ \int_{{\Omega_d}} [\nabla f_n]_3^2 d x\\ 
&\ \ =\ 2\left(\prod_{k=0}^{n-4}I_k\right)I_{n-1} \int_{\theta_1=0}^\pi \sin^{n-2}\theta_1 \left(\frac{R_d^n(\theta_1)-a^n}{n}+2\mu_{\sigma,n} \ln{\left(\frac{R_d(\theta_1)}{a}\right)}-\frac{\mu_{\sigma,n}^2}{n}\left( \frac{1}{R_d^n(\theta_1)}-\frac{1}{a^n}\right)\right)d\theta_1\\
&\ \ =\ \frac{2(n-2)}{n-1}\left(\prod_{k=0}^{n-3}I_k\right) \int_{\theta_1=0}^\pi \sin^{n-2}\theta_1 \left(\frac{R_d^n(\theta_1)-a^n}{n}+2\mu_{\sigma,n} \ln{\left(\frac{R_d(\theta_1)}{a}\right)}-\frac{\mu_{\sigma,n}^2}{n}\left( \frac{1}{R_d^n(\theta_1)}-\frac{1}{a^n}\right)\right)d\theta_1.
\end{align*}

We decompose the integral in three parts:{
\begin{equation}\label{equality}
\int_{\Omega_d}|\nabla f|^2 d x = A_1^n(d)+A_2^n(d)+A_3^n(d) = \frac{2}{n-1}\left(\prod_{k=0}^{n-3}I_k\right)\left( \frac{n-1}{n}W_1^n(d)+2\mu_{\sigma,n} W_2^n(d) -\frac{\mu_{\sigma,n}^2}{n}W_3^n(d)\right),
\end{equation}}
where
$$\left\{
\begin{array}{l}
  W_1^n(d) = \int_{0}^\pi \sin^{n-2}\theta_1 \big(R_d^n(\theta_1)-a^n\big)d\theta_1,\\
  \\
  W_2^n(d) = \int_{0}^\pi \phi_n(\theta_1)\ln{\left(\frac{R_d(\theta_1)}{a}\right)}d\theta_1,\ \ \ \ \ \ \ \text{with $\phi_n(\theta_1) = -n\sin^n\theta_1 + (n-1)\sin^{n-2}\theta_1$, }    \\
  \\
  W_3^n(d) = \int_{0}^\pi \psi_n(\theta_1)\left(\frac{1}{R_d^n(\theta_1)}-\frac{1}{a^n}\right)d\theta_1,\ \ \ \text{with $\psi_n(\theta_1) =  n(n-2)\sin^n\theta_1 + (n-1)\sin^{n-2}\theta_1\ge 0$.}    \\
\end{array}
\right.$$

Note that the equality \eqref{equality} applies also for the planar case. From now on, we take $n\ge 2$.\vspace{5px}  

In the following Lemma, we study $W_k^n(d)$ for each $k\in\{1,2,3\}$. 

\begin{lemma}\label{l1}
For every $n\ge 2$ and every $d\in[0,1-a]$:
\begin{enumerate}
    \item $W_1^n(d) = W_1^n(0)$.\vspace{4px}
    \item $W_2^n(d) = 0$.\vspace{4px}
    \item $W_3^n(d) \ge W_3^n(0)$, with equality if and only of $d=0$.
\end{enumerate}
\end{lemma}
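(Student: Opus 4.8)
The plan is to prove the three assertions separately. In each one the term involving $a$ is manifestly independent of $d$, so the whole statement reduces to controlling the $d$-dependence of the integrals built from $R_d(\theta_1)$, and in every case I would compare directly to the value at $d=0$, where $R_0\equiv 1$. The first assertion follows from a volume identity rather than a computation: since $|y_d|=d<1$, the origin $O$ lies in the interior of $y_d+B$, so in spherical coordinates centred at $O$ this solid is exactly $\{r\le R_d(\theta_1)\}$. Integrating $r^{n-1}$ in $r$ and the remaining angular variables produces a fixed Wallis factor, whence $\mathrm{vol}(y_d+B)=C_n\int_0^\pi \sin^{n-2}\theta_1\,R_d^n(\theta_1)\,d\theta_1$ with $C_n>0$ independent of $d$. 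As $\mathrm{vol}(y_d+B)=\mathrm{vol}(B)$ is independent of $d$, the integral is constant and equals its value $I_{n-2}$ at $d=0$; subtracting the constant $a^nI_{n-2}$ gives $W_1^n(d)=W_1^n(0)$.

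For the second assertion the key observation is that $\phi_n$ is an exact derivative:
$$\frac{d}{d\theta_1}\big(\sin^{n-1}\theta_1\cos\theta_1\big)=(n-1)\sin^{n-2}\theta_1-n\sin^n\theta_1=\phi_n(\theta_1).$$
Writing $\ln(R_d/a)=\ln R_d-\ln a$, the constant part contributes $-\ln a\int_0^\pi\phi_n\,d\theta_1$, which vanishes because $nI_n=(n-1)I_{n-2}$ by \eqref{recursive}. For the remaining part I would integrate by parts: the boundary terms vanish since $\sin^{n-1}\theta_1=0$ at $\theta_1\in\{0,\pi\}$, leaving $-\int_0^\pi \sin^{n-1}\theta_1\cos\theta_1\,\frac{R_d'(\theta_1)}{R_d(\theta_1)}\,d\theta_1$. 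By the factored formula for $R_d'$ derived just before \eqref{equa}, one has $\frac{R_d'}{R_d}=-\frac{d\sin\theta_1}{\sqrt{1-d^2\sin^2\theta_1}}$, so this equals $d\int_0^\pi \frac{\sin^n\theta_1\cos\theta_1}{\sqrt{1-d^2\sin^2\theta_1}}\,d\theta_1$, whose integrand is antisymmetric under $\theta_1\mapsto\pi-\theta_1$ and therefore integrates to $0$. Thus $W_2^n(d)=0$.

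For the third assertion, which I expect to be the crux, the idea is to exploit the reflection $\theta_1\mapsto\pi-\theta_1$ together with the multiplicative identity
$$R_d(\theta_1)\,R_d(\pi-\theta_1)=\big(1-d^2\sin^2\theta_1\big)-d^2\cos^2\theta_1=1-d^2,$$
which follows at once from $R_d(\theta_1)=\sqrt{1-d^2\sin^2\theta_1}\pm d\cos\theta_1$. Since $\psi_n$ depends only on $\sin\theta_1$, it is invariant under this reflection, and subtracting the $d$-independent term $a^{-n}\int_0^\pi\psi_n\,d\theta_1$ reduces the claim to showing $\int_0^\pi\psi_n(\theta_1)R_d^{-n}(\theta_1)\,d\theta_1\ge\int_0^\pi\psi_n(\theta_1)\,d\theta_1$. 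Symmetrizing and applying AM--GM pointwise,
$$R_d^{-n}(\theta_1)+R_d^{-n}(\pi-\theta_1)\ge \frac{2}{\big(R_d(\theta_1)R_d(\pi-\theta_1)\big)^{n/2}}=2(1-d^2)^{-n/2}\ge 2,$$
and integrating against $\psi_n\ge 0$ concludes. For $d>0$ the last inequality is strict, and since $\psi_n>0$ on $(0,\pi)$, the resulting integral inequality is strict, which yields the equality case $d=0$.

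The genuinely delicate steps, and hence the main obstacles, are recognizing that $\phi_n$ is the derivative of $\sin^{n-1}\theta_1\cos\theta_1$ (so that one integration by parts plus an odd-symmetry argument collapses $W_2^n$ to $0$) and the product relation $R_d(\theta_1)R_d(\pi-\theta_1)=1-d^2$ (which converts $W_3^n$ into a transparent AM--GM estimate). Once these are in hand, everything else is routine bookkeeping with the Wallis integrals $I_p$ and the already-established formulas for $R_d$ and $R_d'$.
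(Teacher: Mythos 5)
Your proof is correct; assertion 1 coincides with the paper's argument (interpreting the integral as the volume of $y_d+B$ up to a constant and invoking translation invariance), but your assertions 2 and 3 take genuinely different routes, and both are valid. For assertion 2, the paper folds the integral at $\pi/2$ and uses the identity $R_d(\theta_1)R_d(\pi-\theta_1)=1-d^2$ to merge the two logarithms into the constant $\ln\bigl((1-d^2)/a^2\bigr)$, then kills $\int_0^{\pi/2}\phi_n\,d\theta_1$ via \eqref{recursive}; you instead recognize $\phi_n$ as the exact derivative of $\sin^{n-1}\theta_1\cos\theta_1$, integrate by parts (the boundary terms vanish since $n\ge2$), and conclude by antisymmetry of $\sin^n\theta_1\cos\theta_1/\sqrt{1-d^2\sin^2\theta_1}$ under $\theta_1\mapsto\pi-\theta_1$ --- equally short, trading the product identity for an integration by parts. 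For assertion 3 the difference is more substantial: the paper first bounds $\sqrt{1-d^2\sin^2\theta_1}\le1$ to pass to the auxiliary function $G(d)$ and then proves $G'>0$ by differentiating under the integral sign and symmetrizing, whereas you symmetrize $W_3^n$ directly and apply pointwise AM--GM together with that same product identity, obtaining the explicit bound $\int_0^\pi\psi_n R_d^{-n}\,d\theta_1\ge(1-d^2)^{-n/2}\int_0^\pi\psi_n\,d\theta_1$, which is strict for $d>0$ since $\int_0^\pi\psi_n\,d\theta_1>0$. Your route avoids any differentiation in $d$, yields a quantitative lower bound rather than bare monotonicity, settles the equality case in one stroke, and is arguably more unified, since the identity $R_d(\theta_1)R_d(\pi-\theta_1)=1-d^2$ --- which the paper exploits only for $W_2^n$ --- does all the work; the same trick would also streamline the proof of the third assertion of Lemma \ref{l2}. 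One small point worth making explicit: for $n=2$ your volume identity in assertion 1 requires the symmetry $R_d(2\pi-\theta_1)=R_d(\theta_1)$ to reduce the polar integral over $[0,2\pi]$ to the integral over $[0,\pi]$, but this is the same bookkeeping convention the paper itself adopts.
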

\begin{proof}
\begin{enumerate}
    \item The idea is to see that the quantities $W^n_1(0)$ and $W^n_1(d)$ can be interpreted (up to a multiplicative constant) as volumes of the unit balls $B$ and $y_d+B$ in $\mathbb{R}^n$. Then, since the measure is invariant by translations, we get the equality. We have
{
\begin{align*}
W_1^n(d) &\ \ =\ \int_{0}^\pi \sin^{n-2}\theta_1 \big(R_d^n(\theta_1)-a^n\big)d\theta_1\\ 
&\ \ =\ \frac{n}{\prod_{k=0}^{n-3}I_k}\int_{\theta_1=0}^\pi...\int_{\theta_{n-1}=0}^{\pi}\int_{r=a}^{R_d(\theta_1)}1\times r^{n-1} \prod_{i=1}^{n-2}\sin^{n-1-i}\theta_i dr d\theta_1...d\theta_{n-1} \\
&\ \ =\  \frac{n}{2\prod_{k=0}^{n-3}I_k} |\Omega_d|\\ 
&\ \ =\ \frac{n}{2\prod_{k=0}^{n-3}I_k} \left(|y_d+B|-|a B|\right)\\ 
&\ \ =\ \frac{n}{2\prod_{k=0}^{n-3}I_k} \left(|B|-|a B|\right)\\
&\ \ =\ \frac{n}{\prod_{k=0}^{n-3}I_k}\int_{\theta_1=0}^\pi...\int_{\theta_{n-1}=0}^{\pi}\int_{r=a}^{1}1\times r^{n-1} \prod_{i=1}^{n-2}\sin^{n-1-i}\theta_i  dr d\theta_1...d\theta_{n-1} \\
&\ \ =\ W_1^n(0).
\end{align*}}

\item 
We remark that for every $\theta_1 \in (0,\pi)$ one has $\phi_n(\pi-\theta_1)=\phi_n(\theta_1)$, thus
\begin{align*}
W_2(d) &\ \ =\ \int_{0}^{\frac{\pi}{2}} \phi_n(\theta_1)\ln{\left(\frac{R_d(\theta_1)}{a}\right)}d\theta_1 + \int_{\frac{\pi}{2}}^\pi \phi_n(\theta_1)\ln{\left(\frac{R_d(\theta_1)}{a}\right)}d\theta_1\\
&\ \ =\ \int_{0}^{\frac{\pi}{2}} \phi_n(\theta_1)\ln{\left(\frac{d\cos{\theta_1}+\sqrt{1-d^2\sin^2\theta_1}}{a}\right)}d\theta_1 + \int_{0}^{\frac{\pi}{2}} \phi_n(\theta_1)\ln{\left(\frac{-d\cos{\theta_1}+\sqrt{1-d^2\sin^2\theta_1}}{a}\right)}d\theta_1\\
&\ \ =\ \int_{0}^{\frac{\pi}{2}} \phi_n(\theta_1)\ln{\left(\frac{\big(\ d\cos{\theta_1}+\sqrt{1-d^2\sin^2\theta_1}\ \big)\times\big(\ -d\cos{\theta_1}+\sqrt{1-d^2\sin^2\theta_1}\ \big)}{a^2}\right)}d\theta_1\\
&\ \ =\ \ln{\left(\frac{1-d^2}{a^2}\right)}\times \int_{0}^{\frac{\pi}{2}} \phi_n(\theta_1)d\theta_1\\
&\ \ =\ \ln{\left(\frac{1-d^2}{a^2}\right)}\times \int_0^{\frac{\pi}{2}}(-n\sin^n\theta_1 + (n-1)\sin^{n-2}\theta_1)d\theta_1\\
&\ \ =\ \frac{1}{2}\ln{\left(\frac{1-d^2}{a^2}\right)}\times\Big(-n I_n +(n-1) I_{n-2}\Big) = 0\ \ \ \ \ \ \text{(by \eqref{recursive})}. 
\end{align*}

\item
We have
\begin{align*}
W_3(d) &\ \ =\  \int_{\theta_1=0}^\pi \psi_n(\theta_1)\left(\frac{1}{\big(\ d\cos{\theta_1}+\sqrt{1-d^2\sin^2\theta_1}\ \big)^n}-\frac{1}{a^n}\right)d\theta_1   \\
&\ \ \ge\ \int_{\theta_1=0}^\pi \psi_n(\theta_1)\left(\frac{1}{\big(d\cos{\theta_1}+1\big)^n}-\frac{1}{a^n}\right)d\theta_1 =: G(d)\ge G(0)=W_3(0).
\end{align*}
The inequality $G(d)\ge G(0)$ is a consequence of the monotonicity of the function $G$ and equality occurs if and only if $d=0$. Indeed for every $d\in(0,1-a)$:
\begin{align*}
G'(d) &\ \ =\  \int_{0}^\pi -n \psi_n(\theta_1)\frac{\cos{\theta_1}}{\big(d\cos{\theta_1}+1 \big)^{n+1}}d\theta_1   \\
&\ \ =\ \int_{0}^{\frac{\pi}{2}} -n \psi_n(\theta_1)\frac{\cos{\theta_1}}{\big(d\cos{\theta_1}+1 \big)^{n+1}}d\theta_1+\int_{\theta_1={\frac{\pi}{2}}}^{\pi} -n \psi_n(\theta_1)\frac{\cos{\theta_1}}{\big(d\cos{\theta_1}+1 \big)^{n+1}}d\theta_1\\
&\ \ =\ n \int_{0}^{\frac{\pi}{2}} \psi_n(\theta_1)\cos{\theta_1}\left(\frac{1}{(1-d\cos{\theta_1})^{n+1}}-\frac{1}{(1+d\cos{\theta_1})^{n+1}}\right)d\theta_1\\
&\ \ > \ 0 \ \ \ \ \ \ \text{(because $\forall \theta_1 \in (0,\pi/2),\ \ \ \ \psi_n(\theta_1)\cos{\theta_1}> 0 \ \ \ \text{and}\ \ \ \ (1+d\cos{\theta_1})^{n+1}> (1-d\cos{\theta_1})^{n+1}$).}
\end{align*}
\end{enumerate}

\end{proof}

Using the results of Lemma \ref{l1}, we get 
\begin{align*}
\int_{\Omega_d} |\nabla f|^2 d x  &\ \ =\  \frac{2}{n-1}\left(\prod_{k=0}^{n-3}I_k\right)\left( \frac{n-1}{n}W_1^n(d)+2\mu_{\sigma,n} W_2^n(d) -\frac{\mu_{\sigma,n}^2}{n}W_3^n(d)\right)   \\
&\ \ \leq\ \frac{2}{n-1}\left(\prod_{k=0}^{n-3}I_k\right)\left( \frac{n-1}{n}W_1^n(0)+2\mu_{\sigma,n} W_2^n(0) -\frac{\mu_{\sigma,n}^2}{n}W_3^n(0)\right)=\int_{\Omega_0} |\nabla f|^2 d x, 
\end{align*}
with equality if and only if $d=0$. This proves the second assertion of Proposition \ref{prop_main}.

\subsection{Proof of the third assertion of Proposition \ref{prop_main}}\vspace{2px}
Take $n\ge2$. We have\smaller\smaller
\begin{align*}
\int_{\partial (y_d+B)}f_n^2d\sigma &\ \ =\ 2\int_{\theta_1=0}^\pi...\int_{\theta_{n-1}=0}^{\pi} f_n^2(r,\theta_1,...,\theta_{n-1}) R^{n-2}_d(\theta_1)\prod_{i=1}^{n-2}\sin^{n-1-i}\theta_i \sqrt{R_d^2(\theta_1)+{R_d'}^2(\theta_1)}\ d\theta_1...d\theta_{n-1}\\
&\ \ =\ 2\int_{\theta_1=0}^\pi...\int_{\theta_{n-1}=0}^{\pi} \sin^2\theta_1 \cos^2\theta_2 \left(R_d(\theta_1)+\frac{\mu_{\sigma,n}}{R_d^{n-1}(\theta_1)}\right)^2 R^{n-2}_d(\theta_1)\prod_{i=1}^{n-2}\sin^{n-1-i}\theta_i \sqrt{R_d^2(\theta_1)+{R_d'}^2(\theta_1)}\ d\theta_1...d\theta_{n-1}\\
&\ \ =\ 2\int_{\theta_1=0}^\pi...\int_{\theta_{n-1}=0}^{\pi} \sin^n\theta_1  \left(R_d(\theta_1)+\frac{\mu_{\sigma,n}}{R_d^{n-1}(\theta_1)}\right)^2 R^{n-2}_d(\theta_1) \sqrt{R_d^2(\theta_1)+{R_d'}^2(\theta_1)}
\cos^2\theta_2\prod_{i=2}^{n-2}\sin^{n-1-i}\theta_i \ d\theta_1...d\theta_{n-1}\\
&\ \ =\ \left(2\prod_{k=2}^{n-1}I_k \right)\int_{0}^\pi \left(R_d(\theta_1)+\frac{\mu_{\sigma,n}}{R_d^{n-1}(\theta_1)}\right)^2 R^{n-2}_d(\theta_1)\sin^{n}{\theta_1}\sqrt{R_d^2(\theta_1)+{R_d'}^2(\theta_1)}d\theta_1\\
&\ \ =\ \left(2\prod_{k=2}^{n-1}I_k \right) \int_0^\pi \left(R_d^n(\theta_1)+2\mu_{\sigma,n} +\frac{\mu_{\sigma,n}^2}{R_d^n(\theta_1)}\right)\sin^n\theta_1 \sqrt{R_d^2(\theta_1)+{R_d'}^2(\theta_1)}\ d\theta_1 \\
&\ \ =\ \left(2\prod_{k=2}^{n-1}I_k \right)\big(V_1^n(d)+2\mu_{\sigma,n} (I_n+V_2^n(d))+\mu_{\sigma,n}^2V_3^n(d)\big),
\end{align*}\larger\larger
where 
$$\left\{
\begin{array}{l}
  V_1^n(d) = \int_{0}^\pi \sin^{n}\theta_1 R_d^n(\theta_1)\sqrt{R_d^2(\theta_1)+{{R'_d}}^2(\theta_1)} d\theta_1, \vspace{5px}
  \\
  V_2^n(d) = \int_{0}^\pi \sin^n\theta_1 \frac{d\cos{\theta_1}}{\sqrt{1-d^2\sin^2\theta_1}}d\theta_1\ \ \ \ (\text{by using  \eqref{equa}}), \vspace{5px}
  \\
  V_3^n(d) = \int_{0}^\pi \frac{\sin^n\theta_1}{R_d^{n-1}(\theta_1)\sqrt{1-d^2\sin^2\theta_1}}d\theta_1\ \ \ \ (\text{by using  \eqref{equa}}).   
\end{array}
\right.$$\\

Let us now prove the following Lemma:

\begin{lemma}\label{l2}
For every $n\ge1$ and every $d\in[0,1-a]$, we have:
\begin{enumerate}
    \item $V_1^n(d)=V_1^n(0)$.\vspace{4px}
    \item  $V_2^n(d)=0$.\vspace{4px}
    \item $V_3^n(d)\ge V_3^n(0)$, with equality if and only of $d=0$.
\end{enumerate}
\end{lemma}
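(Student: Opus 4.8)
The plan is to treat the three items separately, in each case exploiting the reflection $\theta_1\mapsto\pi-\theta_1$ together with the identity \eqref{equa}. The single algebraic fact that drives the whole proof is the following: writing $R_{-d}(\theta_1):=-d\cos\theta_1+\sqrt{1-d^2\sin^2\theta_1}$, one has $R_d(\pi-\theta_1)=R_{-d}(\theta_1)$ and the product relation $R_d(\theta_1)\,R_{-d}(\theta_1)=1-d^2$, while $\sin\theta_1$ and $\sqrt{1-d^2\sin^2\theta_1}$ are themselves invariant under $\theta_1\mapsto\pi-\theta_1$.

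For item (1) I would compute nothing. Recognizing that $x_{n-1}=r\sin\theta_1\cos\theta_2$ and that the area element on $\partial(y_d+B)$ is $R_d^{n-2}(\theta_1)\sqrt{R_d^2(\theta_1)+{R_d'}^2(\theta_1)}\prod_{i=1}^{n-2}\sin^{n-1-i}\theta_i\,d\theta_1\cdots d\theta_{n-1}$, one sees that $\int_{\partial(y_d+B)}x_{n-1}^2\,d\sigma$ equals $V_1^n(d)$ times the positive, $d$-independent constant produced by the integrations in $\theta_2,\dots,\theta_{n-1}$. Since $B_1=y_d+B$ is merely a translate of $B$ along the $x_n$-axis, and such a translation changes neither the coordinate $x_{n-1}$ nor the surface measure, this surface integral does not depend on $d$; hence $V_1^n(d)=V_1^n(0)$. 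For item (2), the integrand of $V_2^n$ is $d\,\sin^n\theta_1\cos\theta_1/\sqrt{1-d^2\sin^2\theta_1}$, which is odd under $\theta_1\mapsto\pi-\theta_1$, so its integral over the symmetric interval $[0,\pi]$ vanishes.

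The real work is item (3), which I would handle exactly as $W_3^n$ was handled in Lemma \ref{l1}. Splitting $\int_0^\pi=\int_0^{\pi/2}+\int_{\pi/2}^\pi$ and substituting $\theta_1\mapsto\pi-\theta_1$ in the second piece gives
\[
V_3^n(d)=\int_0^{\pi/2}\frac{\sin^n\theta_1}{\sqrt{1-d^2\sin^2\theta_1}}\left(\frac{1}{R_d^{n-1}(\theta_1)}+\frac{1}{R_{-d}^{n-1}(\theta_1)}\right)d\theta_1 .
\]
Since $V_3^n(0)=\int_0^\pi\sin^n\theta_1\,d\theta_1=\int_0^{\pi/2}2\sin^n\theta_1\,d\theta_1$, it suffices to establish the pointwise bound $\frac{1}{\sqrt{1-d^2\sin^2\theta_1}}\big(R_d^{-(n-1)}+R_{-d}^{-(n-1)}\big)\ge 2$ on $(0,\pi/2)$. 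By the AM--GM inequality $R_d^{-(n-1)}+R_{-d}^{-(n-1)}\ge 2\,(R_dR_{-d})^{-(n-1)/2}=2\,(1-d^2)^{-(n-1)/2}$, so the left-hand factor is at least $2\big(\sqrt{1-d^2\sin^2\theta_1}\,(1-d^2)^{(n-1)/2}\big)^{-1}\ge 2$, because both $\sqrt{1-d^2\sin^2\theta_1}\le 1$ and $(1-d^2)^{(n-1)/2}\le 1$. Integrating yields $V_3^n(d)\ge V_3^n(0)$.

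For the equality statement I would track when both inequalities are simultaneously tight: the AM--GM step forces $R_d=R_{-d}$, i.e.\ $d\cos\theta_1=0$, whereas $(1-d^2)^{(n-1)/2}\le 1$ is an equality only if $d=0$; thus for $d>0$ strict inequality holds on a set of positive measure in $(0,\pi/2)$, and $V_3^n(d)>V_3^n(0)$. I expect item (3) to be the only genuine difficulty, items (1) and (2) being pure symmetry and invariance observations; the crux is to spot the reflection pairing together with the product identity $R_dR_{-d}=1-d^2$, which reduces the global comparison to an elementary pointwise estimate.
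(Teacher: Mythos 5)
Your proof is correct in substance and, in part (3), genuinely simpler than the paper's; but as a proof of the lemma \emph{as stated} (for every $n\ge1$) it has an edge-case hole, described below. On the comparison: for item (1) the paper lifts the integral two dimensions up, reading $V_1^n(d)$ (up to a constant) as the surface measure of the translated unit ball $y_d+B$ in $\mathbb{R}^{n+2}$ and invoking translation invariance of the perimeter; you stay in $\mathbb{R}^n$ and read $V_1^n(d)$ as a constant multiple of $\int_{\partial(y_d+B)}x_{n-1}^2\,d\sigma$, which is unchanged by a translation along the $x_n$-axis --- the same invariance principle in a different geometric incarnation. Item (2) is identical in both proofs (oddness under $\theta_1\mapsto\pi-\theta_1$). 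For item (3) the difference is real: the paper first bounds $V_3^n(d)$ from below by replacing $\sqrt{1-d^2\sin^2\theta_1}$ with $1$, obtaining $H(d)=\int_0^\pi\sin^n\theta_1\,(1+d\cos\theta_1)^{-(n-1)}d\theta_1$, and then must prove that $H$ is increasing by a sign-of-derivative computation run through a second reflection argument (exactly as for $G$ in Lemma \ref{l1}); your reflection pairing combined with the product identity $R_dR_{-d}=1-d^2$ and AM--GM replaces that whole monotonicity step by a one-line pointwise bound, with no derivatives at all. That is a genuine and welcome simplification.

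The caveats both concern $n=1$, which is not an idle case: the paper applies this lemma with index $n-2$ in the Dirichlet--Steklov argument, so $V_k^1$ is precisely what is needed for Theorem \ref{main1} in $\mathbb{R}^3$. First, your item (1) relies on the coordinate $x_{n-1}$ (and, as written with the factor $\cos\theta_2$, on $n\ge3$; the case $n=2$ is a trivial adaptation using $x_1=r\sin\theta_1$), so it says nothing when $n=1$; there you must either use the paper's dimension lift or observe that $2V_1^1(d)=\int_{\partial(y_d+B)}|x_1|\,d\sigma$ for the circle in $\mathbb{R}^2$ and argue as before. Second, in your equality analysis for item (3), when $n=1$ both mechanisms you cite degenerate: the AM--GM step is an identity (the exponent $n-1$ vanishes) and $(1-d^2)^{(n-1)/2}=1$ for every $d$, so neither ``forces'' anything. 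The strict inequality for $d>0$ still holds, but its source is the remaining factor $1/\sqrt{1-d^2\sin^2\theta_1}>1$ on $(0,\pi/2)$, which your chain of inequalities contains but your discussion never invokes. With these two repairs your argument covers the full range $n\ge1$ claimed by the lemma.
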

\begin{proof}
\begin{enumerate}
    \item Take $B$ the unit ball of $\mathbb{R}^{n+2}$ centred at the origin $O$  and $y_d=(0,\cdots,0,d)\in \mathbb{R}^{n+2}$.
    
    In the same spirit of the proof of the assertion 1 of Lemma \ref{l1}, the idea is to see that the quantities $V^n_1(0)$ and $V^n_1(d)$ can be interpreted (up to a multiplicative constant) as the perimeters of $B$ and $y_d+B$. Then, since the perimeter is invariant by translations, we get the equality.\vspace{6px}
    
    We have
\begin{align*}
V_1^n(d) &\ \ =\  \int_{0}^\pi \sin^{n}\theta_1 R_d^n(\theta_1)\sqrt{R_d^2(\theta_1)+{{R'_d}}^2(\theta_1)} d\theta_1   \\
&\ \ =\ \frac{1}{2\prod_{k=2}^{n-1}I_k}\cdot 2\int_{\theta_1=0}^\pi...\int_{\theta_{n+1}=0}^{\pi}1\times R_d^{n}(\theta_1)\sqrt{R_d^2(\theta_1)+{{R'_d}}^2(\theta_1)} \prod_{i=1}^{n}\sin^{n+1-i}\theta_i d\theta_1...d\theta_{n+1}\\
&\ \ =\ \frac{1}{2\prod_{k=2}^{n-1}I_k} P(y_d+B)\\
&\ \ =\  \frac{1}{2\prod_{k=2}^{n-1}I_k} P(B)\\
&\ \ =\  V_1^n(0). 
\end{align*}

\item
We have
\begin{align*}
V_2^n(d) &\ \ =\ \int_{0}^{\frac{\pi}{2}} \sin^n\theta_1 \frac{d\cos{\theta_1}}{\sqrt{1-d^2\sin^2\theta_1}}d\theta_1+\int_{\frac{\pi}{2}}^\pi \sin^n\theta_1 \frac{d\cos{\theta_1}}{\sqrt{1-d^2\sin^2\theta_1}}d\theta_1\\
&\ \ =\ \int_{0}^{\frac{\pi}{2}} \sin^n\theta_1 \frac{d\cos{\theta_1}}{\sqrt{1-d^2\sin^2\theta_1}}d\theta_1-\int_0^{\frac{\pi}{2}} \sin^n t \frac{d\cos{t}}{\sqrt{1-d^2\sin^2t}} dt\\
&\ \ =\ 0.
\end{align*}

\item
We have
\begin{align*}
V_3^n(d) &\ \ =\ \int_{0}^\pi \frac{\sin^n\theta_1}{\big(d\cos{\theta_1}+\sqrt{1-d^2\sin^2\theta_1}\big)^{n-1}\sqrt{1-d^2\sin^2\theta_1}}d\theta_1\\
&\ \ \ge\ \int_{0}^\pi \frac{\sin^n\theta_1}{\big(d\cos{\theta_1}+1\big)^{n-1}}d\theta_1 =: H(d) \ge H(0)=V_3^n(0).
\end{align*}
The inequality $H(d)\ge H(0)$ follows from the monotonicity of $H$ and is an equality if and only if $d=0$. This can be proved with the same method used for $G$ in the previous section.
\end{enumerate}
\end{proof}

Using the results of Lemma \ref{l2}, we get 
\begin{align*}
\int_{\partial \Omega_d} f^2d\sigma  &\ \ =\ \left(2\prod_{k=2}^{n-1}I_k \right)\big(V_1^n(d)+2\mu_{\sigma,n} (I_n+V_2^n(d))+\mu_{\sigma,n}^2V_3^n(d)\big)+\int_{\partial (a B)} f^2d\sigma\\
&\ \ \ge\ \left(2\prod_{k=2}^{n-1}I_k \right)\big(V_1^n(0)+2\mu_{\sigma,n} (I_n+V_2^n(0))+\mu_{\sigma,n}^2V_3^n(0)\big)+\int_{\partial (a B)} f^2d\sigma\\
&\ \ =\ \int_{\partial \Omega_0} f^2d\sigma,
\end{align*}
which proofs the third assertion of Proposition \ref{prop_main}. 
\section{The Dirichlet--Steklov problem}
In this section, we show that the ideas of our proof in Section \ref{s} also apply to the problem considered in \cite{verma}. Thus, we give an alternative proof of \cite[Theorem 1]{verma} which deals with $n\ge3$ and tackle the planar case which {is} to our knowledge still open (see \cite[Remark 2]{verma}). \vspace{3px}

Let $n\ge2$ and $B_1$ be an open ball in $\mathbb{R}^n$ and $B_2$ be an open ball contained in $B_1$. We are interested in the eigenvalue problem 
$$\begin{cases}
    \Delta u = 0 &\qquad\mbox{in $B_1\backslash \overline{B_2}$,}\vspace{1mm} \\
    \ \ \ u = 0& \qquad\mbox{on $\partial B_2$,} \vspace{1mm} \\
    \   \frac{\partial u}{\partial n} = \tau u& \qquad\mbox{on $\partial B_1$.}
\end{cases}$$
The first eigenvalue of $B_1\backslash \overline{B_2}$ is given by the following Rayleigh quotient: 
$$\tau_1\left(B_1\backslash \overline{B_2}\right)=\inf\left\{ \frac{\int_{B_1\backslash \overline{B_2}} |\nabla u|^2dx}{\int_{\partial B_1} u^2 d\sigma}\ \ \Big|\ \ u\in H^1(\Omega)\backslash \{0\}\ \text{such that}\  u = 0\  \text{on $\partial B_2$} \right\}.$$
As stated in Theorem \ref{main1}, the eigenvalue $\tau_1$ is maximal when the balls are concentric. As for the case of pure Steklov boundary condition, we can assume without loss of generality that the obstacle $B_2$ is the open ball of radius $a\in(0,1)$ centred at the origin $O$ and $B_1 = y_d + B$, where $B$ is the unit ball centred at the origin $O$. We use the notations introduced in Section \ref{s}. \vspace{5px}

Using separation of variables S. Verma and G. Santhanam proved {in \cite[Section 2.1]{verma}} that the first  eigenfunction $g_n$ of the spherical shell $\Omega_0$ is given by: 
$$g_n(r,\theta_1,...,\theta_{n-1})=
\begin{cases}
    \ln{r}-\ln{a}, &\qquad\mbox{if $n=2$,}\vspace{1mm} \\
     \left(\frac{1}{a^{n-2}}-\frac{1}{r^{n-2}} \right ), & \qquad\mbox{if $n\ge 3$.}
\end{cases}$$
\subsection{A key proposition}

Here also, Theorem \ref{main1} is an immediate consequence of the following proposition: 
\begin{proposition}\label{propo}
Let $n\ge2$. We have: 
\begin{enumerate}
    \item $g_n$ can be used as a test function in the variational definition of $\tau_1(\Omega_d)$.\vspace{6px} 
    \item $\int_{\Omega_d} |\nabla g_n|^2dx \leq \int_{\Omega_0} |\nabla g_n|^2dx$. \vspace{4px}
    \item $\int_{\partial(y_d+B)} g_n^2d\sigma \ge \int_{\partial B} g_n^2d\sigma$, with equality if and only if $d=0$. 
\end{enumerate}
\end{proposition}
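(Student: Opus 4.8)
The plan is to mirror the structure of the pure Steklov proof from Section \ref{s}, since Proposition \ref{propo} is the exact analogue of Proposition \ref{prop_main}. The eigenfunction $g_n$ is now \emph{radial}, depending only on $r$, which should make the computations substantially cleaner than in the Steklov case where $f_n$ carried the angular factor $\sin\theta_1\cos\theta_2$.

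First I would establish assertion (1). Since $g_n$ is radial and vanishes on $\partial B_2 = \partial(aB)$ by construction (note $g_2(a,\cdot)=\ln a - \ln a = 0$ and $g_n(a,\cdot)=a^{2-n}-a^{2-n}=0$ for $n\ge 3$), it is immediate that $g_n\in H^1(\Omega_d)$ with $g_n=0$ on $\partial B_2$, so it is admissible in the variational characterization of $\tau_1(\Omega_d)$. Unlike the Steklov case, there is no zero-average constraint to verify here, so this step is essentially trivial once one checks the boundary vanishing.

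For assertion (2), I would compute $\nabla g_n$ in spherical coordinates: since $g_n$ depends only on $r$, the gradient has only a radial component $g_n'(r)$, namely $1/r$ for $n=2$ and $(n-2)/r^{n-1}$ for $n\ge 3$. Then $\int_{\Omega_d}|\nabla g_n|^2\,dx$ reduces to an integral of $(g_n'(r))^2 r^{n-1}$ over $r\in[a,R_d(\theta_1)]$ against the angular measure. The key point is that the $r$-integral of $(g_n')^2 r^{n-1}$ is of the form $\int_a^{R} c\,r^{-1}\,dr = c\ln(R/a)$ (for $n=2$) or a negative power yielding $\frac{1}{R^{n-2}}-\frac{1}{a^{n-2}}$-type terms (for $n\ge3$). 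The resulting angular integral should collapse, via the Wallis recursion \eqref{recursive} and the $\theta_1\mapsto\pi-\theta_1$ symmetry exactly as in the computation of $W_2^n(d)=0$ in Lemma \ref{l1}, to a quantity that depends on $d$ only through $\ln\bigl(\frac{1-d^2}{a^2}\bigr)$ or a similar monotone expression. I expect the inequality to reduce to showing that this angular average is maximized at $d=0$, which follows from the concavity/monotonicity argument of the type used for $G$ in Lemma \ref{l1}; I would package the relevant $d$-dependence into an auxiliary function and differentiate, splitting $\int_0^\pi$ into $\int_0^{\pi/2}+\int_{\pi/2}^{\pi}$ and pairing $\theta_1$ with $\pi-\theta_1$.

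For assertion (3), the boundary integral is taken only over $\partial(y_d+B)$ (since $g_n\equiv 0$ on $\partial B_2$, that part contributes nothing), and on $\partial(y_d+B)$ we have $r=R_d(\theta_1)$. Using the area element $R_d^{n-2}(\theta_1)\sqrt{R_d^2+{R_d'}^2}$ together with the crucial identity \eqref{equa}, the integrand $g_n^2(R_d(\theta_1))$ becomes an explicit function of $R_d(\theta_1)$, and I would again reduce the claim to the monotonicity in $d$ of an integral of the form $\int_0^\pi \Phi(R_d(\theta_1))\,\sin^{n-2}\theta_1\,d\theta_1$, proved by the same differentiation-and-symmetrization scheme as for $H$ in Lemma \ref{l2}, with strict inequality for $d>0$ and equality only at $d=0$. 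The main obstacle I anticipate is purely bookkeeping: verifying that the logarithmic case $n=2$ behaves correctly, since there $g_2=\ln r-\ln a$ makes $g_2^2$ a squared logarithm rather than a rational power of $R_d$, so the monotonicity argument for assertion (3) must be checked separately and cannot simply be read off from the $n\ge 3$ formula — this is precisely the planar case that \cite{verma} left open, so it deserves the most care.
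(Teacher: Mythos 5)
Your assertions (1) and (2) are handled correctly. Assertion (1) is exactly the paper's argument ($g_n$ vanishes on $\partial(aB)$, and there is no mean-zero constraint to check). For assertion (2) the paper is even shorter than you are -- it simply invokes the remark of \cite{verma} that the inequality follows from the monotonicity of $r\mapsto\frac{\partial g_n}{\partial r}$ -- but your computational route does close: for $n=2$ the pairing $R_d(\theta_1)R_d(\pi-\theta_1)=1-d^2$ gives $\int_{\Omega_d}|\nabla g_2|^2\,dx=\pi\ln\bigl(\frac{1-d^2}{a^2}\bigr)$, which is maximal at $d=0$, and for $n\ge3$ the energy equals a constant minus a positive multiple of $\int_0^\pi\sin^{n-2}\theta_1\,R_d^{2-n}(\theta_1)\,d\theta_1$, which is minimal at $d=0$ by $R_d\le1+d\cos\theta_1$ and the $H$-type monotonicity of Lemma \ref{l2}.

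The genuine gap is in assertion (3), in two respects. (i) For $n\ge3$ the claim does \emph{not} reduce to the monotonicity of a single integral $\int_0^\pi\Phi(R_d(\theta_1))\sin^{n-2}\theta_1\,d\theta_1$: by \eqref{equa} the area element contributes the factor $R_d(\theta_1)/\sqrt{1-d^2\sin^2\theta_1}$, which is not a function of $R_d$ alone, and expanding $\bigl(a^{2-n}-R_d^{2-n}\bigr)^2$ produces the cross term $-\frac{2}{a^{n-2}}\int_0^\pi\sin^{n-2}\theta_1\sqrt{R_d^2+{R_d'}^2}\,d\theta_1$, which carries a \emph{negative} coefficient, so pointwise lower bounds and monotonicity schemes cannot control it. What makes the proof work is that this cross term and the leading term $\frac{1}{a^{2n-4}}V_1^{n-2}(d)$ are exactly \emph{constant} in $d$: the first because $V_2^{n-2}\equiv0$ (odd-symmetry cancellation), the second because $V_1^{n-2}(d)$ is, up to a constant, the perimeter of the translated unit sphere of $\mathbb{R}^n$; only $V_3^{n-2}(d)$ is estimated by the $H$-scheme. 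In short, the correct reduction is Lemma \ref{l2} applied with $n$ replaced by $n-2$ (this is why that lemma is stated for all $n\ge1$), i.e.\ constant $+$ zero $+$ increasing, and the ``differentiation-and-symmetrization'' pairing alone does not produce a sign for the two constant pieces. (ii) More seriously, for $n=2$ -- the case left open in \cite{verma}, i.e.\ the actual novelty of Theorem \ref{main1} -- your proposal only announces that the case ``deserves the most care''. The paper's planar proof rests on an ingredient of a different nature: parameterizing $y_d+\partial B$ by arc length, the boundary integral splits into $2\pi\ln^2a$, a nonnegative term $\int_0^{2\pi}\ln^2(1+d^2+2d\cos t)\,dt$ (which yields the strict inequality for $d>0$), and a cross term proportional to $\int_0^{2\pi}\ln(1+d^2+2d\cos t)\,dt$, for which one needs the exact identity $\int_0^{2\pi}\ln(1+d^2+2d\cos t)\,dt=0$. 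This is Jensen's formula / the Gauss mean value property (see \cite[4.3.1]{alain}), or equivalently the series computation via \eqref{cata}; it is an identity, not a monotonicity statement, and it is the missing idea without which the planar case of assertion (3) -- hence of Theorem \ref{main1} -- remains unproved in your proposal.
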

\begin{proof}
This proposition has been proved in \cite{verma} for the case $n\ge 3$.  \vspace{4px}

The first assertion is obvious since $g_n(a,\theta_1,\cdots,\theta_{n-1})=0$.  \vspace{4px}

As for the second, it has been remarked in \cite{verma}\vspace{4px} page 13, the inequality $\int_{\Omega_d} |\nabla g_n|^2 dx\leq \int_{\Omega_0} |\nabla g_n|^2 dx$ is a straightforward consequence of the monotonicity of $r\longmapsto \frac{\partial g_n}{\partial r}$. Unfortunately, this is not the case for the inequality on the boundary (assertion 3) for which the author needs more computations (see \cite[Section 2.2]{verma}). \vspace{4px}

First, we show that Lemma \ref{l2} allows us to give an alternative and simpler proof of the last inequality in the case $n\ge 3$, then we prove it in the planar case $n=2$. \vspace{5px} 

If $n\ge 3$, we have 
\begin{align*}
\int_{\partial (y_d+B)} g_n^2 d\sigma &\ \ =\ \left(2\prod_{j=2}^{n-3}I_j \right)\int_0^\pi \left(\frac{1}{a^{n-2}}-\frac{1}{R_d^{n-2}(\theta_1)}\right)^2 R_d^{n-2}(\theta_1) \sin^{n-2}\theta_1 \sqrt{R_d^2(\theta_1)+{R_d'}^2(\theta_1)}d\theta_1 \\
&\ \ =\ \left(2\prod_{j=2}^{n-3}I_j \right)\left(\frac{1}{a^{2n-4}} V_1^{n-2}(d)-\frac{2}{a^{n-2}}\big(I_{n-2}+V_2^{n-2}(d)\big)+V_3^{n-2}(d)\right)\\
&\ \ \ge\ \left(2\prod_{j=2}^{n-3}I_j \right)\left(\frac{1}{a^{2n-4}} V_1^{n-2}(0)-\frac{2}{a^{n-2}}\big(I_{n-2}+V_2^{n-2}(0)\big)+V_3^{n-2}(0)\right)\ =\ \int_{\partial B} g_n^2 d\sigma.
\end{align*}

Now take $n=2$. We use the following parameterization of the shifted sphere: $$y_d + \partial B = \left\{M(t)=\left(\sin{t},d+\cos{t}\right)\ |\ t\in [0,2\pi)\right\}.$$ 

Note that: $|M(t)| = 1+d^2+2 d \cos{t}$. We have 
\begin{align*}
\int_{\partial (y_d+B)} g_2^2 d\sigma &\ \ =\ \int_0^{2\pi} \left(\ln{\left(1+d^2+2 d \cos{t}\right)}-\ln{a}\right)^2dt \\
&\ \ =\ \int_0^{2\pi} \ln^2{\left(1+d^2+2 d \cos{t}\right)}dt -2\ln{a} \int_0^{2\pi} \ln{\left(1+d^2+2 d \cos{t}\right)}dt + 2\pi \ln^2{a} \\
&\ \ \ge\ 2\pi \ln^2{a} \ =\ \int_{\partial B} g_2^2 d\sigma,
\end{align*}
because $$\int_0^{2\pi} \ln^2{\left(1+d^2+2 d \cos{t}\right)}dt\ge 0 \ \ \ \ \text{and}\ \ \ \ \ \int_0^{2\pi} \ln{\left(1+d^2+2 d \cos{t}\right)}dt = 0.$$
Indeed, on the one hand the inequality is obvious and is an equality if and only if $d=0$, on the other hand the second assertion is a special case of a classical Lemma in complex analysis used in the proof of the so called Jensen formula (see for example  \cite[4.3.1]{alain}). We note that this equality can also be obtained by series expansion and the following classical identity (cf.  \cite[(6)]{catalan}):
\begin{equation}\label{cata}
\forall x \in \left(-\frac{1}{4},\frac{1}{4}\right),\ \ \ \ \sum_{n=1}^{+\infty} \frac{1}{n}\binom{2n}{n}x^n = 2\ln{\left(\frac{1-\sqrt{1-4x}}{2x}\right)}=2\ln{\left(\frac{2}{1+\sqrt{1-4x}}\right).}
\end{equation}
We have
\smaller\smaller
\begin{align*}
\int_0^{2\pi} \ln{\left(1+d^2+2 d \cos{t}\right)}d t &\ \ =\ \int_0^{2\pi} \ln{\left(1+d^2\right)}d t + \int_0^{2\pi} \ln{\left(1+\frac{2 d\cos{t}}{1+d^2}\right)}d t \\
&\ \ =\ 2\pi\ln{\left(1+d^2\right)} + \int_0^{2\pi} \sum_{n=1}^{+\infty} \frac{(-1)^{n+1}}{n} \left(\frac{2 d}{1+d^2}\right)^n\cos^n{t}\ d t \\
&\ \ =\ 2\pi\ln{\left(1+d^2\right)} +  \sum_{n=1}^{+\infty} \frac{(-1)^{n+1}}{n} \left(\frac{2 d}{1+d^2}\right)^n \int_0^{2\pi} \cos^n{t}\ d t \\
&\ \ =\ 2\pi\ln{\left(1+d^2\right)} - \pi \sum_{n=1}^{+\infty} \frac{1}{n}\binom{2n}{n} \left(\frac{d}{1+d^2}\right)^{2n}\ \ \text{(by \eqref{wallis_values}, $\int_0^{2\pi} \cos^{2n}{t}\ d t =2I_{2n} = \frac{2\pi}{2^{2n}}\binom{2n}{n}$ )}\\
&\ \ =\ 0\ \ \ \ \ \ \ \text{(we took $x=\left(\frac{d}{1+d^2}\right)^{2}$ in \eqref{cata})}. 
\end{align*} \larger \larger
It remains to prove that the last quantity is equal to zero. We recall the following classical identity:
$$\forall x\in(-1,1),\ \ \ \ \ \sum_{n=0}^{+\infty} I_n x^n = \frac{4}{\sqrt{1-x^2}}\arctan{\left(\sqrt{\frac{1+x}{1-x}}\right)}.$$ 
By writing the identity for $-x$ and adding each term together, we get
$$\forall x\in(-1,1),\ \ \ \ \ \sum_{n=0}^{+\infty} I_{2n} x^{2n} = \frac{2}{\sqrt{1-x^2}}\left(\arctan{\left(\sqrt{\frac{1+x}{1-x}}\right)}+\arctan{\left(\sqrt{\frac{1-x}{1+x}}\right)}\right)=\frac{\pi}{\sqrt{1-x^2}}.$$ 
Then for every $x\in(0,1)$,
\begin{align*}
\sum_{n=1}^{+\infty} \frac{1}{n}I_{2n} x^{2n} &\ \ =\ \sum_{n=1}^{+\infty} 2 I_{2n} \int_0^x u^{2n-1}d u = 2 \int_0^x \left(\sum_{n=1}^{+\infty}I_{2n} u^{2n-1}\right)d u = 2\pi \int_0^x \left(\frac{1}{u\sqrt{1-u^2}}-\frac{1}{u}\right)d u \\
&\ \ =2\pi \left[-\ln{\left(\sqrt{1-u^2}+1\right)}\right]^x_0= -2\pi \ln{\left(\frac{\sqrt{1-x^2}+1}{2}\right)}.\ 
\end{align*} 
By taking $x=\frac{2d}{1+d^2}$, we get $$\sum_{n=1}^{+\infty} \frac{1}{n}I_{2n} \left(\frac{2 d}{1+d^2}\right)^{2n} = 2\pi\ln{\left(1+d^2\right)},$$
which {completes} the proof.\vspace{2mm}

This completes the proof of the third assertion and the demonstration of Proposition \ref{propo}. 

\end{proof}
\subsection{Proof of Theorem \ref{main1}}
Finally, we conclude as before:
$$\tau_1(\Omega_d)\leq\frac{\int_{\Omega_d} |\nabla g_n|^2dx}{\int_{\partial (y_d+B)} g_n^2d\sigma}\leq\frac{\int_{\Omega_0} |\nabla g_n|^2dx}{\int_{\partial B} g_n^2d\sigma}=\tau_1(\Omega_0),$$
with equality if and only if $d=0$. This ends the proof of Theorem \ref{main1}.




\section{Computation of the first Steklov eigenvalue of spherical shells}\label{s:appendix}
In the present section, we compute the Steklov eigenvalues of the spherical shell $\Omega_0=B\backslash a B\subset \mathbb{R}^n$, where $a\in (0,1)$. We then prove a monotonicity result on these eigenvalues, which allows us to give the exact value of $\sigma_1(\Omega_0)$ and its corresponding eigenfunctions. 

\begin{theorem}\label{steklov_eigenvalue}
Let $n\ge2$. The first nonzero Steklov eigenvalue of the spherical shell $\Omega_0=B\backslash a B\subset \mathbb{R}^n$ is
$$\sigma_1(\Omega_0) = \frac{(n+1)a^{n+1}+a^n+a+n-1-\sqrt{\left((n+1)a^{n+1}+a^n+a+n-1\right)^2-4(n-1)a\left(1-a^n\right)^2}}{2a\left(1-a^n\right)}\cdot$$
It is of multiplicity $n$ and the corresponding eigenfunctions are
$$\begin{array}{ccccc}
u^i_n & : & \mathbb{R}^n & \longrightarrow & \mathbb{R} \\
 & & x=(x_1,\cdots,x_n) & \longmapsto & x_i\left(1+\frac{\mu_{\sigma,n}}{|x|^n}\right), \\
\end{array}$$
where $i\in \llbracket 1,n\rrbracket$ and $\mu_{\sigma,n} = \frac{1-\sigma_1(\Omega_0)}{n+\sigma_1(\Omega_0)-1}$.
\end{theorem}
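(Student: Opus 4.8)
The natural route is separation of variables adapted to the rotational symmetry of the shell. First I would recall that every harmonic function on $\Omega_0=B\setminus aB$ decomposes over spherical harmonics: writing a solution as $\sum_{k\ge 0} R_k(r)\,Y_k(\theta)$, where $Y_k$ ranges over a basis of the degree-$k$ spherical harmonics on $S^{n-1}$, harmonicity forces each radial factor to solve the Euler equation $R_k''+\frac{n-1}{r}R_k'-\frac{k(k+n-2)}{r^2}R_k=0$, whose solution space is $\mathrm{span}\{r^k,\,r^{-(k+n-2)}\}$ for $k\ge 1$ (and $\mathrm{span}\{1,\,r^{2-n}\}$, resp.\ $\{1,\ln r\}$ when $n=2$, for $k=0$). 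Thus for each angular degree $k$ the problem collapses to the two-dimensional family $R_k(r)=A r^{k}+B r^{-(k+n-2)}$.

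Next I would impose the two Steklov conditions. Since the outer unit normal of $\Omega_0$ is $+\partial_r$ on the sphere $r=1$ and $-\partial_r$ on the sphere $r=a$, the condition $\partial u/\partial n=\sigma u$ reads $R_k'(1)=\sigma R_k(1)$ on the outer boundary and $-R_k'(a)=\sigma R_k(a)$ on the inner one. Each relation is linear in $(A,B)$, so together they form a homogeneous $2\times 2$ system, and a nonzero harmonic of angular degree $k$ solves the Steklov problem exactly when its determinant vanishes, which is a quadratic $Q_k(\sigma)=0$. The associated eigenspace has dimension equal to that of the degree-$k$ spherical harmonics.

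For $k=1$ the two radial solutions are $r$ and $r^{-(n-1)}$, so the eigenfunctions take the form $x_i\bigl(1+\mu|x|^{-n}\bigr)$ with $\mu=B/A$. Eliminating $\mu$ between the two boundary relations---the outer one gives $\mu=(1-\sigma)/(n+\sigma-1)$, which is precisely the stated $\mu_{\sigma,n}$, and the inner one gives a second rational expression for $\mu$---and clearing denominators produces an explicit quadratic in $\sigma$ of the form $a(1-a^n)\sigma^2-S\sigma+(n-1)(1-a^n)=0$, whose smaller root is the claimed value of $\sigma_1(\Omega_0)$. Because the degree-one harmonics are spanned by the $n$ linear functions $x_1,\dots,x_n$, this eigenvalue has multiplicity at least $n$, with eigenfunctions exactly the $u_n^i$.

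The crux---and what I expect to be the main obstacle---is to show that this degree-one value is genuinely the \emph{first} nonzero eigenvalue, i.e.\ that no smaller positive root arises from another angular degree. This is where the monotonicity result (Lemma \ref{monotonicity}) enters: I would prove that the smallest positive root of $Q_k$ is strictly increasing in $k\ge 1$, and separately that the nonzero root supplied by the radial mode $k=0$ exceeds the $k=1$ value. Establishing this monotonicity cleanly, uniformly in $a\in(0,1)$ and $n\ge 2$, is the delicate part, since the $Q_k$ depend on $k$ through several competing powers of $a$; I would seek a monotone reformulation, for instance by solving the determinant relation for $\sigma$ as an implicit function of $k$ and controlling its sign of variation, or by comparing the relevant Rayleigh quotients directly. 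Once monotonicity is secured, the minimum over all admissible modes is attained only at $k=1$, which simultaneously pins down the value of $\sigma_1(\Omega_0)$ and its multiplicity $n$, completing the proof.
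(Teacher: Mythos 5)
Your setup reproduces the paper's approach exactly: separation of variables over spherical harmonics, the Euler equation for the radial factor, the two Steklov conditions forming a homogeneous $2\times 2$ system whose vanishing determinant gives a quadratic $Q_k(\sigma)=0$ for each angular degree $k$, the degree-one eigenfunctions $x_i\bigl(1+\mu_{\sigma,n}|x|^{-n}\bigr)$ with the correct $\mu_{\sigma,n}$, and the reduction of the theorem to two facts: (i) the smaller root $\delta_k^{(1)}$ of $Q_k$ is strictly increasing in $k\ge 1$, and (ii) the nonzero radial ($k=0$) eigenvalue $\delta_0$ exceeds $\delta_1^{(1)}$. But that is precisely where your proof stops and where the paper's real work begins. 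You name (i) as ``the delicate part'' and gesture at two strategies without carrying either out; this is a genuine gap, not a routine verification. The paper proves (i) by writing $\delta_k^{(1)}=2C_k/\bigl(-B_k+\sqrt{B_k^2-4A_kC_k}\bigr)$, normalizing to the quantity $-B_k/\sqrt{k(k+n-2)}$, and showing this is strictly decreasing in $k$ via an auxiliary function $h_{a,n}(t)$ whose derivative is controlled by minimizing a second auxiliary function $g_{t,n}$ over $a\in(0,1)$ --- a multi-step estimate, uniform in $a$ and $n$, that cannot be waved through. Your alternative of ``comparing the relevant Rayleigh quotients directly'' is not a viable shortcut: the numbers $\delta_k^{(1)}$ for different $k$ are not ordered by any min--max principle over nested spaces (each $Q_k$ contributes two roots, and the interlacing of all of them is exactly what is in question), so an algebraic argument of the paper's kind is really needed.

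For (ii) the paper has a short argument you did not supply: take $\gamma(x)=x_1$ as a test function for $\sigma_1(\Omega_0)$ (admissible since $\int_{\partial\Omega_0}x_1\,d\sigma=0$ by symmetry); shrinking the domain of the Dirichlet integral and enlarging the boundary integral gives $\sigma_1(\Omega_0)\le\sigma_1(B)=1$, and $1<\delta_0$ is elementary. Without this step, even granting (i), you could not exclude that the radial eigenvalue lies below the degree-one value, so the claimed multiplicity $n$ and the formula for $\sigma_1(\Omega_0)$ would remain unproven. A smaller omission: your opening assertion that every harmonic function on the shell decomposes over the separated solutions (i.e.\ that the list of eigenvalues is exhaustive) is stated without justification; the paper supports it by citing expansion theorems for harmonic functions on annuli. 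In short, your skeleton is the paper's skeleton and is correct as far as it goes, but the two statements constituting Lemma \ref{monotonicity} --- the analytic core of the proof --- are left unestablished.
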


\begin{remark}
Theorem \ref{steklov_eigenvalue} has already been proved for the planar case by B. Dittmar \cite{MR1421478} (see also \cite{phd}). For higher dimensions, A. Fraser and R. Schoen \cite{fraser} gave asymptotic formula for the lowest eigenvalues of spherical shells when the hole is vanishing. In this case, it is easy to identify the first eigenvalues (in particular the first one). Unfortunately, this is no longer the case when the hole is not vanishing as explained in sections \ref{c} and \ref{w}. 
\end{remark}

\subsection{Computation of the eigenvalues via classical separation of variables technique}\label{c}\vspace{4px}


Finding the eigenvalues and eigenfunctions of the Laplacian on special domains (balls, rectangles, annulus...) is a classical problem (see for example \cite[Section 3]{MR3124880}). The standard method is to look for eigenfunctions via separation of variables and then prove that they form a complete basis of a convenient function space, this combined with orthogonality properties of the eigenfunctions shows that we didn't miss any {eigenvalues and eigenfunctions}.\vspace{4px}

Take $k\in\mathbb{N}$, let us search harmonic functions $h_k$ of the form 
$$\begin{array}{ccccc}
h_k & : & \mathbb{R}_+\times[0,\pi]\times\cdots\times [0,\pi]\times [0,2\pi] & \longrightarrow & \mathbb{R} \\
 & & (r,\theta_1,\cdots,\theta_{n-1}) & \longmapsto & \alpha_k(r)\beta_k(\theta_1,\cdots,\theta_{n-1}), \\
\end{array}$$
where $\beta_k\in H_k^n$ is a spherical harmonic of order $k$ and $H_k^n$ is the set of restrictions of homogeneous harmonic polynomial of degree $k$ with $n$ variables on the unit sphere $\partial B$ (for an introduction to harmonic polynomials we refer to \cite[Chapter 5]{harmonic}). It is well-known that the set $H_k^n$ corresponds to the eigenspace of the Laplace-Beltrami operator $-\Delta_{\partial B}$ associated to the eigenvalue {$k(k+n-2)$}. \vspace{4px}

We have
{
$$\Delta h_k = \left(\frac{\partial^2}{\partial r^2}+\frac{n-1}{r}\frac{\partial }{\partial r}+r^{-2}\Delta_{\partial B}\right)h_k = \left(\alpha_k''(r)+\frac{n-1}{r}\alpha_k'(r)-\frac{k(k+n-2)}{r^2}\alpha_k(r)\right)\beta_k(\theta_1,\cdots,\theta_{n-1}).$$}

The condition $\Delta h_k = 0$ implies that $\alpha_k$ must satisfy the differential equation {$$\alpha_k''(r)+\frac{n-1}{r}\alpha_k'(r)-\frac{k(k+n-2)}{r^2}\alpha_k(r)=0.$$}

By standard methods of solving ODEs, the solutions of the last equation are given by
$$\alpha_0(r)=\left\{\begin{matrix}  p_{0,2} + q_{0,2} \ln{r}\ \ \ \ \text{if $n=2$,}
\vspace{4px}
\\
p_{0,n} + \frac{q_{0,n}}{r^{n-2}} \ \ \ \ \ \ \ \ \ \ \text{if $n\ge3$,}
\end{matrix}\right.$$
and for $k\ge 1$,
$$\alpha_k(r) = p_{k,n}r^k + \frac{q_{k,n}}{r^{k+n-2}},$$
where $p_{k,n}$ and $q_{k,n}$ are constants.

It remains to look for all possible values $\delta_k$ such that $\frac{\partial h_k}{\partial n} = \delta_k h_k$ on $\partial \Omega_0$. This equality is equivalent to 
$$\left\{
\begin{array}{l}
  \alpha_k'(1)=\delta_k \alpha_k(1),  \vspace{4px}\\
  \alpha_k'(a)=-\delta_k \alpha_k(a).  \\
\end{array}
\right.$$

As explained in the proof of  \cite[Proposition 3]{fraser}, those equalities imply that the possible eigenvalues $\delta_k$ are solutions of equations of second order.\vspace{4px} 

When $k=0$, we find two eigenvalues: $0$ that corresponds to constant eigenfunctions and $\delta_0$ that corresponds to a (non-constant) radial one. 

$$\delta_0=\left\{\begin{matrix}  \frac{1+a}{a\ln{1/a}},\ \ \ \ \ \ \ \ \ \ \ \ \ \ \ \ \ \ \text{if $n=2$,}
\vspace{5px}
\\
\frac{(n-2)(1+a^{n-1})}{a(1-a^{n-2})}, \ \ \ \ \ \ \ \ \ \ \text{if $n\ge3$.}
\end{matrix}\right.$$

The corresponding (radial) eigenfunction is given by:
$$h_0(r,\theta_1,\cdots,\theta_{n-1})=\left\{\begin{matrix}   1+\delta_0\ln{r},\ \ \ \ \ \ \ \ \ \ \ \ \ \ \ \ \ \ \text{if $n=2$,}\vspace{3px}
\\
(2-n-\delta_0)+\frac{\delta_0}{r^{n-2}}, \ \ \ \ \ \ \ \text{if $n\ge3$.}
\end{matrix}\right.$$

On the other hand, (as mentioned in \cite{fraser}) when $k\ge 1$, one finds two eigenvalues $\delta_k^{(1)}<\ \delta_k^{(2)}$ corresponding to the solutions of the following equation:
\begin{equation}\label{equation}
 A_k\delta^2 + B_k\delta +C_k = 0,
\end{equation}
where
$$\left\{
\begin{array}{l}
  A_k = a-a^{2k+n-1},\vspace{4px}
  \\
  B_k = -\big((k+n-2)a^{2k+n-1} + ka^{2k+n-2}+ka +k+n-2\big), \vspace{4px}
  \\
  C_k = (k+n-2)k(1-a^{2k+n-2}).  \\
\end{array}
\right.$$

We compute the determinant $\Delta_k$, and use the fact that $a\in (0,1)$ to check that $\Delta_k>0$.
{
\begin{align*}
\Delta_k &\ \ =\ B_k^2-4A_k C_k \\
&\ \ =\ \Big[ (k+n-2)a^{2k+n-1}+k a^{2k+n-2}+ka+k+n-2 \Big]^2 - 4(k+n-2)ka(1-a^{2k+n-2})^2 \\
&\ \ \ge\ (ka+k+n-2)^2-4(k+n-2)k a\big(1-a^{2k+n-2}\big)^2  \ \ \ \ \ \text{(because $(k+n-2)a^{2k+n-1}+k a^{2k+n-2}\ge0$)}  \\
&\ \ \ge\ \big((k+n-2) + k a\big)^2-4(k+n-2)k a \ \ \ \ \ \text{(because $0\leq (1-a^{2k+n-2})^2<1$)}  \\
&\ \ =\ \big((k+n-2)-k a\big)^2 >0. 
\end{align*}} 

Then, the equation \eqref{equation} admits two different positive solutions $\delta_k^{(1)}:=\frac{-B_k-\sqrt{\Delta_k}}{2A_k}\ <\ \frac{-B_k+\sqrt{\Delta_k}}{2A_k}=: \delta_k^{(2)}$.\vspace{4px}

By straightforward computations, the corresponding eigenfunctions are given by: 
\begin{equation}\label{eigenfunctions}
h_k^{(i)}(r,\theta_1,\cdots,\theta_{n-1}) = \left(r^k+\frac{k-\delta_k^{(i)}}{n+\delta_k^{(i)}+k-2}\cdot\frac{1}{r^{k+n-2}}\right) Y_{k,j}(\theta_1,\cdots,\theta_{n-1}),
\end{equation}
where $Y_{k,j}\in H_k^n$ corresponds to the $j$-th (with $j\in \llbracket 1,\text{dim}\  H_k^n\rrbracket$) spherical harmonic of order $k$  and $i\in\{1,2\}$. 

Thus, the multiplicity of $\delta_k^{(i)}$ is equal to $$\text{dim}\  H_k^n\ =\ \begin{pmatrix}n+k-1
\\ 
n-1
\end{pmatrix}-\begin{pmatrix}n+k-3
\\ 
n-1
\end{pmatrix}.$$ 

At last, by using expansions results for harmonic functions on annuli (see \cite[Section 9.17]{harmonic}, for $n=2$ and \cite[Section 10.1]{harmonic}, for $n\ge 3$), we deduce that the eigenfunctions {we found, form a complete basis} of the space of harmonic functions on the annulus $\Omega_0$. 

It remains to determine the lowest eigenvalue between $\delta_0$ and the $\delta_k^{(i)}$ for $k\in \mathbb{N}^*$ and $i\in \{1,2\}$. 
\subsection{A monotonicity result\vspace{4px}}\label{w}
We state and prove the following key lemma, which combined with results of Section \ref{c} gives an immediate proof of Theorem \ref{steklov_eigenvalue}.
\begin{lemma}\label{monotonicity} We have:
\begin{enumerate}
    \item The sequence $\big(\delta_k^{(1)}\big)_{k\ge1}$ is strictly increasing.
        \item $\sigma_1(\Omega_0)<\delta_0$.
\end{enumerate}
\end{lemma}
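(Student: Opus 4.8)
The plan is to work directly with the two--boundary relations from Section~\ref{c} rather than with the explicit roots, reducing both assertions to sign checks on a single scalar function. Writing $m=k+n-2$, the eigenvalue condition for frequency $k\ge1$ is $F_k(\delta)=0$, where
\[
F_k(\delta)=(k-\delta)(m-a\delta)-a^{2k+n-2}(m+\delta)(k+a\delta);
\]
expanding shows this has exactly the coefficients $A_k,B_k,C_k$ of the quadratic \eqref{equation}. I would first localize the smaller root. Since $a\in(0,1)$ and $k\ge1$ we have $F_k(0)=C_k=k(k+n-2)\big(1-a^{2k+n-2}\big)>0$ and $F_k(k)=-a^{2k+n-2}(m+k)k(1+a)<0$, while $A_k>0$ makes $F_k$ an upward parabola with two positive roots. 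Hence $0<\delta_k^{(1)}<k<\delta_k^{(2)}$; in particular $\delta_k^{(1)}$ is the unique root of $F_k$ in $(0,k)$, and this characterization extends $k\mapsto\delta_k^{(1)}$ to a smooth function of real $k\ge1$.

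For assertion~(1) I would prove that this continuous branch is strictly increasing. On the region $\{0<\delta<k\}$ all four factors $k-\delta,\ m-a\delta,\ m+\delta,\ k+a\delta$ are positive, so $F_k(\delta)=0$ is equivalent to $\Phi(k,\delta)=0$, where
\[
\Phi(k,\delta)=\ln(k-\delta)+\ln(m-a\delta)-\ln(m+\delta)-\ln(k+a\delta)-(2k+n-2)\ln a.
\]
Differentiating gives
\[
\partial_\delta\Phi=-\frac{1}{k-\delta}-\frac{a}{m-a\delta}-\frac{1}{m+\delta}-\frac{a}{k+a\delta}<0,
\]
and, using $dm/dk=1$,
\[
\partial_k\Phi=\Big(\tfrac{1}{k-\delta}-\tfrac{1}{k+a\delta}\Big)+\Big(\tfrac{1}{m-a\delta}-\tfrac{1}{m+\delta}\Big)-2\ln a.
\]
Each bracketed difference is positive (in each the subtracted denominator is the larger one, since $\delta>0$ and $a>0$), and $-2\ln a>0$ because $a<1$; hence $\partial_k\Phi>0$. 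The implicit function theorem then yields $\frac{d}{dk}\delta_k^{(1)}=-\partial_k\Phi/\partial_\delta\Phi>0$, so the sequence $\big(\delta_k^{(1)}\big)_{k\ge1}$ is strictly increasing.

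For assertion~(2) it suffices to produce one eigenvalue strictly below $\delta_0$, and the localization already gives $\delta_1^{(1)}<1$. It then remains to check $\delta_0>1$ in both regimes: for $n\ge3$, $\delta_0=\frac{(n-2)(1+a^{n-1})}{a(1-a^{n-2})}>1$ because $(n-2)(1+a^{n-1})\ge1>a-a^{n-1}$; for $n=2$, $\delta_0=\frac{1+a}{a\ln(1/a)}>1$ because $a\ln(1/a)=-a\ln a\le \tfrac1e<1<1+a$ on $(0,1)$. Therefore $\sigma_1(\Omega_0)\le\delta_1^{(1)}<1<\delta_0$, which is assertion~(2). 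Combined with assertion~(1) and $\delta_k^{(1)}<\delta_k^{(2)}$, this identifies $\sigma_1(\Omega_0)=\delta_1^{(1)}$ and completes the proof of Theorem~\ref{steklov_eigenvalue}.

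The main obstacle is assertion~(1): the closed forms $\delta_k^{(i)}$ contain $\sqrt{\Delta_k}$, and differentiating them in $k$ (with the exponentials $a^{2k+n-2}$ producing $\ln a$ factors) is unwieldy. The decisive idea that removes this difficulty is to discard the explicit roots and differentiate the logarithmic reformulation $\Phi=0$ implicitly, which makes the sign of $\partial_k\Phi$ transparent, the crucial input being $-\ln a>0$. By comparison assertion~(2) is soft once the bound $\delta_1^{(1)}<1$ is available.
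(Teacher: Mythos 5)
Your proposal is correct, but it takes a genuinely different route from the paper's proof. For assertion (1), the paper works with the explicit root $\delta_k^{(1)}=2C_k\big/\bigl(-B_k+\sqrt{B_k^2-4A_kC_k}\bigr)$ and proves monotonicity by writing it as a ratio of an increasing numerator $2\sqrt{(k+n-2)k}\,\bigl(1-a^{2k+n-2}\bigr)$ over a decreasing denominator; the decreasing part reduces to showing that $-B_k/\sqrt{(k+n-2)k}$ decreases in $k$, which the paper does through a rather heavy calculus argument (the auxiliary function $h_{a,n}$, its derivative, a second auxiliary function $g_{t,n}$, and a minimization in $a$). You never touch the closed-form roots: you keep the characteristic equation in the factored form $F_k(\delta)=(k-\delta)(m-a\delta)-a^{2k+n-2}(m+\delta)(k+a\delta)$ with $m=k+n-2$, localize the smaller root by the sign checks $F_k(0)>0>F_k(k)$ (which incidentally re-proves that the two roots are real, distinct and positive without computing the discriminant $\Delta_k$, and yields the useful extra fact $\delta_k^{(1)}<k<\delta_k^{(2)}$), and then differentiate the logarithmic reformulation $\Phi(k,\delta)=0$ implicitly, where the signs of $\partial_\delta\Phi$ and $\partial_k\Phi$ are transparent. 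For assertion (2), the paper argues variationally: it inserts the test function $x\mapsto x_1$ into the Rayleigh quotient to get $\sigma_1(\Omega_0)\le\sigma_1(B)=1<\delta_0$, and its written proof covers only $n\ge3$, deferring the planar case to the literature; you instead reuse the localization bound $\delta_1^{(1)}<1$ together with the elementary inequalities $\delta_0>1$ (valid for $n=2$ via $-a\ln a\le 1/e$ and for $n\ge3$ directly), so your treatment is uniform in $n\ge2$. What the paper's route buys is a proof of (2) independent of the analysis of the quadratic \eqref{equation}; what yours buys is brevity, self-containedness, and dimension-uniformity. Two small points you should state explicitly to make the writeup airtight: first, $\sigma_1(\Omega_0)\le\delta_1^{(1)}$ uses that $\delta_1^{(1)}$ is indeed a nonzero Steklov eigenvalue of $\Omega_0$, which comes from Section \ref{c}; second, in the implicit-function step one should note that the locally defined smooth branch coincides with $t\mapsto\delta^{(1)}(t)$ because $F_t$ has exactly one root in $(0,t)$ — routine, but worth a sentence.
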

\begin{proof}
The case $n=2$ had been considered in \cite{MR1421478,phd}. Let $n\ge3$, 
\begin{enumerate}
    \item 
    we have
    $$\delta_k^{(1)} = \frac{2C_k}{-B_k+\sqrt{B_k^2-4A_k\times C_k}} = \frac{2(k+n-2)k(1-a^{2k+n-2})}{-B_k+\sqrt{B_k^2-4(k+n-2)ka(1-a^{2k+n-2})^2}}\cdot$$
    The idea of proof is to write $\delta_k^{(1)} = P_k/Q_k$, where $(P_k)_k$ (resp. $(Q_k)_k$) is a positive increasing (resp. decreasing) sequence. Indeed, we can write
    $$\delta_k^{(1)} = \frac{2\sqrt{(k+n-2)k}(1-a^{2k+n-2})}{-\frac{B_k}{\sqrt{(k+n-2)k}}+\sqrt{\left(-\frac{B_k}{\sqrt{(k+n-2)k}}\right)^2-4a(1-a^{2k+n-2})^2}}\cdot$$

    The sequences $\left(2\sqrt{(k+n-2)k}(1-a^{2k+n-2})\right)_k$ and $\left(a(1-a^{2k+n-2})\right)_k$ are strictly increasing. It remains to prove that the (positive) sequence $\Big(-\frac{B_k}{\sqrt{(k+n-2)k}}\Big)_{k\ge 1}$ is strictly decreasing. 
   
    We have 
 \begin{align*}
-\frac{B_k}{\sqrt{(k+n-2)k}} &\ \ =\ \frac{(k+n-2)a^{2k+n-1}+ka^{2k+n-2}+ka+k+n-2}{\sqrt{k}\sqrt{k+n-2}}\\
&\ \ =\ \frac{(k+n-2)\big(a^{2k+n-1}+1\big)+k a\big(a^{2k+n-3}+1\big)}{\sqrt{k}\sqrt{k+n-2}}\\
&\ \ =\ \sqrt{\frac{k+n-2}{k}}\big(a^{2k+n-1}+1\big)+a\sqrt{\frac{k}{k+n-2}}\big(a^{2k+n-3}+1\big).
\end{align*}  
Let us introduce the function: 
$$\begin{array}{ccccc}
h_{a,n} & : & [1,+\infty[ & \longrightarrow & \mathbb{R} \\
 & & t & \longmapsto & \sqrt{\frac{t+n-2}{t}}\big(a^{2t+n-1}+1\big)+a\sqrt{\frac{t}{t+n-2}}\big(a^{2t+n-3}+1\big). \\
\end{array}$$

we prove that $h_{a,n}$ is strictly decreasing. To do so, we compute the derivative $h'_{a,n}$ and prove that it is negative on $[1,+\infty[$.

{We have for every $t\ge1$,
 \begin{align*}
h'_{a,n}(t)&\ \ =\ \frac{-\frac{n-2}{t^2} \left(a^{n+2 t-1}+1\right)}{2 \sqrt{\frac{n+t-2}{t}}}+\frac{\frac{n-2}{(n+t-2)^2}a \left(a^{n+2 t-3}+1\right)}{2 \sqrt{\frac{t}{n+t-2}}}\\
&\ \ +\ 2 \ln (a) \sqrt{\frac{t}{n+t-2}} a^{n+2 t-2}+2 \ln (a) \sqrt{\frac{n+t-2}{t}} a^{n+2 t-1}\\
&\ \ <\ \frac{-\frac{n-2}{t^2} \left(a^{n+2 t-1}+1\right)}{2 \sqrt{\frac{n+t-2}{t}}}+\frac{\frac{n-2}{(n+t-2)^2}a \left(a^{n+2 t-3}+1\right)}{2 \sqrt{\frac{t}{n+t-2}}} \ \ \ \ \text{(because $\ln(a)<0$).}\\
&\ \ =\ \frac{(n-2)(a^{n+2t-1}+1)}{2t(n+t-2)}\sqrt{\frac{t}{n+t-2}}\cdot \Big(a\cdot\frac{a^{n+2t-3}+1}{a^{n+2t-1}+1}-1-\frac{n-2}{t}\Big) \\
&\ \ <\ \frac{(n-2)(a^{n+2t-1}+1)}{2t(n+t-2)}\sqrt{\frac{t}{n+t-2}}\cdot \Big(\frac{a^{n+2t-3}+1}{a^{n+2t-1}+1}-\frac{t+n-2}{t}\Big)\ \ \ \ \text{(because $a\in (0,1)$).}
\end{align*}} 
We have 
$$\frac{a^{n+2t-3}+1}{a^{n+2t-1}+1}-\frac{t+n-2}{t}<0 \ \ \Leftrightarrow\ \ \ \frac{t+n-2}{t} a^{n+2t-1}- a^{n+2t-3}+\frac{n-2}{t}>0.$$
Now, let $t\ge1$ and $n\ge 3$. We consider the function 
$$\begin{array}{ccccc}
g_{t,n} & : & (0,1) & \longrightarrow & \mathbb{R} \\
 & & a & \longmapsto & \frac{n+t-2}{t} a^{n+2t-1}- a^{n+2t-3}+\frac{n-2}{t}\cdot \\
\end{array}$$
We compute the derivative of $g_{t,n}$ on $(0,1)$. We have for every $a\in (0,1)$, 
$$g'_{t,n}(a) = \frac{n+t-2}{t}(n+2t-1) a^{n+2t-4}\Big(a^2-\frac{t}{n+t-2}\cdot\frac{n+2t-3}{n+2t-1}\Big).$$
We deduce that $g_{t,n}$ is decreasing on $(0,a_{t,n})$ and increasing on $(a_{t,n},1)$, which implies that it attains its minimum in $a_{t,n}$, where 
$$a_{t,n} = \sqrt{\frac{t}{n+t-2}\cdot\frac{n+2t-3}{n+2t-1}}\cdot$$
We have
 \begin{align*}
g_{t,n}(a) &\ \ \ge\ \ g_{t,n}(a_{t,n})\\
&\ \ =\ \frac{n+t-2}{t}\Big(\frac{t}{n+t-2}\Big)^{\frac{n+2t-1}{2}}\Big(\frac{n+2t-3}{n+2t-1}\Big)^{\frac{n+2t-1}{2}}-\Big(\frac{t}{n+t-2}\Big)^{\frac{n+2t-3}{2}}\Big(\frac{n+2t-3}{n+2t-1}\Big)^{\frac{n+2t-3}{2}}+\frac{n-2}{t} \\
&\ \ =\ -\Big(\frac{t}{n+t-2}\Big)^{\frac{n+2t-3}{2}}\Big(\frac{n+2t-3}{n+2t-1}\Big)^{\frac{n+2t-3}{2}} \frac{2}{n+2t-1}+\frac{n-2}{t}\\
&\ \ \ge\ -\frac{1}{t+\frac{n-1}{2}} +\frac{n-2}{t}\ >\ 0\ \ \ \ \ \text{(because $n-2\ge1$ and $t+\frac{n-1}{2}>t$).}
\end{align*}  
We deduce that for all $t\ge 1$: $h'_{a,n}(t)<0$, which implies that $h_{a,n}$ is strictly decreasing on $[1,+\infty[$. In particular, the sequence $\Big(-\frac{B_k}{\sqrt{(k+n-2)k}}\Big)_{k\ge 1}$ is strictly decreasing and so is $$\left(\ -\frac{B_k}{\sqrt{(k+n-2)k}}+\sqrt{\left(-\frac{B_k}{\sqrt{(k+n-2)k}}\right)^2-4a(1-a^{2k+n-2})^2}\ \right)_{k\ge 1}.$$
\item Take $\gamma: x\in\mathbb{R}^n\longmapsto x_1$ an eigenfunction corresponding to the first nonzero Steklov eigenvalue of the unit ball $B$ centred at the origin $O$. This function can be used as a test function in the variational definition of $\sigma_1(B\backslash a B)$. 

We write 
 \begin{align*}
\sigma_1(B\backslash a B) &\ \ =\ \ \inf\left\{ \frac{\int_{B\backslash a B} |\nabla u|^2dx}{\int_{\partial (B\backslash a B)} u^2 d\sigma}\ \ \Big|\ \ u\in H^1(\Omega)\backslash \{0\}\ \text{such that}\ \int_{\partial \Omega} u d \sigma= 0  \right\}\\
&\ \ \leq\ \frac{\int_{B\backslash a B} |\nabla \gamma|^2dx}{\int_{\partial B\cup \partial (a B)} \gamma^2 d\sigma}\\
&\ \ \leq\  \frac{\int_{B} |\nabla \gamma|^2dx}{\int_{\partial B} \gamma^2 d\sigma}\\
&\ \ =\  \sigma_1(B)\\
&\ \ =\  1\ \ \ \ \ \ \ \ \ \ \ \   \text{(see \cite[Example 1.3.2]{MR3662010})}\\
&\ \ <\ (n-2)\frac{1+a^{n-1}}{a(1-a^{n-2})}\\
&\ \ =\  \delta_0.
\end{align*} 
\end{enumerate}
\end{proof}
\subsection{Proof of Theorem \ref{steklov_eigenvalue}}
We have $\delta_1^{(1)}<\delta_1^{(2)}$ and  by Lemma \ref{monotonicity} 
$$\sigma_1(\Omega_0)<\ \delta_0\ \ \text{and}\ \ \forall k\ge2,\ \ \ \ \ \delta_1^{(1)}<\ \delta_k^{(1)}<\delta_k^{(2)}.$$
This implies that $\delta_1^{(1)}$ is the lowest nonzero Steklov eigenvalue of $\Omega_0$, which writes
$\sigma_1(\Omega_0)=\delta_1^{(1)}$. It is of multiplicity $n$ {and the corresponding eigenfunctions, given by \eqref{eigenfunctions}, are as follows:}
$$\begin{array}{ccccc}
u^i_n & : & \mathbb{R}^n & \longrightarrow & \mathbb{R} \\
 & & x=(x_1,\cdots,x_n) & \longmapsto & \left(|x|+\frac{\mu_{\sigma,n}}{|x|^{n-1}}\right)\frac{x_i}{|x|}\ =\ x_i\left(1+\frac{\mu_{\sigma,n}}{|x|^n}\right),   \\
\end{array}$$
where $i\in \llbracket 1,n\rrbracket$ and $\mu_{\sigma,n} = \frac{1-\sigma_1(\Omega_0)}{n+\sigma_1(\Omega_0)-1}$.

\section*{Acknowledgments}
The author would like to thank Antoine Henrot for pointing out this problem and Mamoune Benchekroun, Antoine Henrot and Jimmy Lamboley for useful discussions.\vspace{3px}

The author would also like to thank the anonymous referees for their careful reading and usefull comments that helped to improve the manuscript.\vspace{3px}

This work was partially supported by the project ANR-18-CE40-0013 SHAPO financed by the French Agence Nationale de la Recherche (ANR).






\bibliographystyle{plain}
\bibliography{sample.bib}

\end{document}